\newcommand{\leqnomode}{\tagsleft@true}
\newcommand{\reqnomode}{\tagsleft@false}
\newtheorem{theorem}{Theorem}[section]
\newtheorem{lemma}[theorem]{Lemma}
\theoremstyle{definition}
\newenvironment{remark}
  {\pushQED{\qed}\remx}
  {\popQED\endremx}
\newsavebox\myboxA
\newsavebox\myboxB
\newlength\mylenA
\newcommand*\yoverline[2][0.75]{%
    \sbox{\myboxA}{$\m@th#2$}%
    \setbox\myboxB\null% Phantom box
    \ht\myboxB=\ht\myboxA%
    \dp\myboxB=\dp\myboxA%
    \wd\myboxB=#1\wd\myboxA% Scale phantom
    \sbox\myboxB{$\m@th\overline{\copy\myboxB}$}%  Overlined phantom
    \setlength\mylenA{\the\wd\myboxA}%   calc width diff
    \addtolength\mylenA{-\the\wd\myboxB}%
    \ifdim\wd\myboxB<\wd\myboxA%
       \rlap{\hskip 0.5\mylenA\usebox\myboxB}{\usebox\myboxA}%
    \else
        \hskip -0.5\mylenA\rlap{\usebox\myboxA}{\hskip 0.5\mylenA\usebox\myboxB}%
    \fi}
\numberwithin{equation}{section}
\begin{document}

%-------------------------------------

%New Commands

%-------------------------------------

\newcommand{\diver}{\operatorname{div}}
\newcommand{\lin}{\operatorname{Lin}}
\newcommand{\curl}{\operatorname{curl}}
\newcommand{\ran}{\operatorname{Ran}}
\newcommand{\kernel}{\operatorname{Ker}}
\newcommand{\la}{\langle}
\newcommand{\ra}{\rangle}
\newcommand{\N}{\mathds{N}}
\newcommand{\R}{\mathds{R}}
\newcommand{\C}{\mathds{C}}

%%%%%%%%%%%%%%%%%%%%%%%%%%%%%%%%%%%%%%%%
\newcommand{\ld}{\lambda}
\newcommand{\fai}{\varphi}
\newcommand{\0}{0}
\newcommand{\n}{\mathbf{n}}
\newcommand{\uu}{{\boldsymbol{\mathrm{u}}}}
\newcommand{\UU}{{\boldsymbol{\mathrm{U}}}}
\newcommand{\buu}{\bar{{\boldsymbol{\mathrm{u}}}}}
\newcommand{\ten}{\\[4pt]}
\newcommand{\six}{\\[4pt]}
\newcommand{\nb}{\nonumber}
\newcommand{\hgamma}{H_{\Gamma}^1(\OO)}
\newcommand{\opert}{O_{\varepsilon,h}}
\newcommand{\barx}{\bar{x}}
\newcommand{\barf}{\bar{f}}
\newcommand{\hatf}{\hat{f}}
\newcommand{\xoneeps}{x_1^{\varepsilon}}
\newcommand{\xh}{x_h}
\newcommand{\scaled}{\nabla_{1,h}}
\newcommand{\scaledb}{\widehat{\nabla}_{1,\gamma}}
\newcommand{\vare}{\varepsilon}
\newcommand{\A}{{\bf{A}}}
\newcommand{\RR}{{\bf{R}}}
\newcommand{\B}{{\bf{B}}}
\newcommand{\CC}{{\bf{C}}}
\newcommand{\D}{{\bf{D}}}
\newcommand{\K}{{\bf{K}}}
\newcommand{\oo}{{\bf{o}}}
\newcommand{\id}{{\bf{Id}}}
\newcommand{\E}{\mathcal{E}}
\newcommand{\ii}{\mathcal{I}}
\newcommand{\sym}{\mathrm{sym}}
\newcommand{\lt}{\left}
\newcommand{\rt}{\right}
\newcommand{\ro}{{\bf{r}}}
\newcommand{\so}{{\bf{s}}}
\newcommand{\e}{{\bf{e}}}
\newcommand{\ww}{{\boldsymbol{\mathrm{w}}}}
\newcommand{\vv}{{\boldsymbol{\mathrm{v}}}}
\newcommand{\zz}{{\boldsymbol{\mathrm{z}}}}
\newcommand{\U}{{\boldsymbol{\mathrm{U}}}}
\newcommand{\G}{{\boldsymbol{\mathrm{G}}}}
\newcommand{\VV}{{\boldsymbol{\mathrm{V}}}}
\newcommand{\T}{{\boldsymbol{\mathrm{U}}}}
\newcommand{\II}{{\boldsymbol{\mathrm{I}}}}
\newcommand{\ZZ}{{\boldsymbol{\mathrm{Z}}}}
\newcommand{\hKK}{{{\bf{K}}}}
\newcommand{\f}{{\bf{f}}}
\newcommand{\g}{{\bf{g}}}
\newcommand{\lkk}{{\bf{k}}}
\newcommand{\tkk}{{\tilde{\bf{k}}}}
\newcommand{\W}{{\boldsymbol{\mathrm{W}}}}
\newcommand{\Y}{{\boldsymbol{\mathrm{Y}}}}
\newcommand{\EE}{{\boldsymbol{\mathrm{E}}}}
\newcommand{\F}{{\bf{F}}}
\newcommand{\spacev}{\mathcal{V}}
\newcommand{\spacevg}{\mathcal{V}^{\gamma}(\Omega\times S)}
\newcommand{\spacevb}{\bar{\mathcal{V}}^{\gamma}(\Omega\times S)}
\newcommand{\spaces}{\mathcal{S}}
\newcommand{\spacesg}{\mathcal{S}^{\gamma}(\Omega\times S)}
\newcommand{\spacesb}{\bar{\mathcal{S}}^{\gamma}(\Omega\times S)}
\newcommand{\skews}{H^1_{\barx,\mathrm{skew}}}
\newcommand{\kk}{\mathcal{K}}
\newcommand{\OO}{O}
\newcommand{\bhe}{{\bf{B}}_{\vare,h}}
\newcommand{\pp}{{\mathbb{P}}}
\newcommand{\ff}{{\mathcal{F}}}
\newcommand{\mWk}{{\mathcal{W}}^{k,2}(\Omega)}
\newcommand{\mWa}{{\mathcal{W}}^{1,2}(\Omega)}
\newcommand{\mWb}{{\mathcal{W}}^{2,2}(\Omega)}
\newcommand{\twos}{\xrightharpoonup{2}}
\newcommand{\twoss}{\xrightarrow{2}}
\newcommand{\bw}{\bar{w}}
\newcommand{\br}{\bar{{\bf{r}}}}
\newcommand{\bz}{\bar{{\bf{z}}}}
\newcommand{\tw}{{W}}
\newcommand{\tr}{{{\bf{R}}}}
\newcommand{\tz}{{{\bf{Z}}}}
\newcommand{\lo}{{{\bf{o}}}}
\newcommand{\hoo}{H^1_{00}(0,L)}
\newcommand{\ho}{H^1_{0}(0,L)}
\newcommand{\hotwo}{H^1_{0}(0,L;\R^2)}
\newcommand{\hooo}{H^1_{00}(0,L;\R^2)}
\newcommand{\hhooo}{H^1_{00}(0,1;\R^2)}
\newcommand{\dsp}{d_{S}^{\bot}(\barx)}
\newcommand{\LB}{{\bf{\Lambda}}}
\newcommand{\LL}{\mathbb{L}}
\newcommand{\mL}{\mathcal{L}}
\newcommand{\mhL}{\widehat{\mathcal{L}}}
\newcommand{\loc}{\mathrm{loc}}
\newcommand{\tqq}{\mathcal{Q}^{*}}
\newcommand{\tii}{\mathcal{I}^{*}}
\newcommand{\Mts}{\mathbb{M}}
\newcommand{\pot}{\mathrm{pot}}
\newcommand{\tU}{{\widehat{\bf{U}}}}
\newcommand{\tVV}{{\widehat{\bf{V}}}}
\newcommand{\pt}{\partial}
\newcommand{\bg}{\Big}
\newcommand{\hA}{\widehat{{\bf{A}}}}
\newcommand{\hB}{\widehat{{\bf{B}}}}
\newcommand{\hCC}{\widehat{{\bf{C}}}}
\newcommand{\hD}{\widehat{{\bf{D}}}}
\newcommand{\fder}{\partial^{\mathrm{MD}}}
\newcommand{\Var}{\mathrm{Var}}
\newcommand{\pta}{\partial^{0\bot}}
\newcommand{\ptaj}{(\partial^{0\bot})^*}
\newcommand{\ptb}{\partial^{1\bot}}
\newcommand{\ptbj}{(\partial^{1\bot})^*}
\newcommand{\geg}{\Lambda_\vare}
\newcommand{\tpta}{\tilde{\partial}^{0\bot}}
\newcommand{\tptb}{\tilde{\partial}^{1\bot}}
\newcommand{\ua}{u_\alpha}
\newcommand{\pa}{p\alpha}
\newcommand{\qa}{q(1-\alpha)}
\newcommand{\Qa}{Q_\alpha}
\newcommand{\Qb}{Q_\eta}
\newcommand{\ga}{\gamma_\alpha}
\newcommand{\gb}{\gamma_\eta}
\newcommand{\ta}{\theta_\alpha}
\newcommand{\tb}{\theta_\eta}

%%%%%%%%%%%%%%%%%%%%%%%%%%

\newcommand{\mH}{\mathcal{H}}
\newcommand{\csob}{\mathcal{S}}
\newcommand{\mA}{\mathcal{A}}
\newcommand{\mK}{\mathcal{K}}
\newcommand{\mS}{\mathcal{S}}
\newcommand{\mI}{\mathcal{I}}
\newcommand{\tas}{{2_*}}
\newcommand{\tbs}{{2^*}}
\newcommand{\tm}{{\tilde{m}}}
\newcommand{\tdu}{{\tilde{u}}}
\newcommand{\tpsi}{{\tilde{\psi}}}

\selectlanguage{english}
\title{Scattering threshold for radial defocusing-focusing mass-energy double critical nonlinear Schr\"odinger equation in $d\geq 5$  }
\author{Yongming Luo \thanks{Institut f\"{u}r Wissenschaftliches Rechnen, Technische Universit\"at Dresden, Germany} \thanks{\href{mailto:yongming.luo@tu-dresden.de}{Email: yongming.luo@tu-dresden.de}}
}

\date{}
\maketitle

\begin{abstract}
We extend the scattering result given by Cheng et al. for the radial defocusing-focusing mass-energy double critical nonlinear Schr\"odinger equation in $d\leq 4$ to the whole range $d\geq 3$. The main ingredient is a suitable long time perturbation theory which is applicable for $d\geq 5$.
\end{abstract}

\section{Introduction and main results}
In this paper, we consider the defocusing-focusing mass-energy double critical nonlinear Schr\"odinger equation (DFDCNLS)
\begin{equation}\label{NLS}
i\pt_t u+\Delta u-|u|^{2_*-2}u+|u|^{2^*-2}u=0\quad\text{in $\R\times \R^d$}
\end{equation}
with $d\geq 5$, $\tas=2+\frac{4}{d}$ and $\tbs=2+\frac{4}{d-2}$. \eqref{NLS} is a special case of the NLS with combined nonlinearities
\begin{equation}\label{NLS0}
i\pt_t u+\Delta u+\mu_1|u|^{p_1-2}u+\mu_2|u|^{p_2-2}u=0\quad\text{in $\R\times \R^d$}
\end{equation}
with $d\geq 1$, $\mu_1,\mu_2\in\R$ and $p_1,p_2\in(2,\infty)$. \eqref{NLS0} is a prototype model in many applications of quantum physics such as nonlinear optics and Bose-Einstein condensation. For example, in the study of Bose-Einstein condensation, the nonlinearities $|u|^2 u$, $|u|^3u$ and $|u|^4u$ model the two-body interaction, quantum fluctuation and three-body interaction respectively. The signs $\mu_i$ can be tuned to be defocusing ($\mu_i<0$) or focusing ($\mu_i>0$), indicating the repulsivity or attractivity of the nonlinearity. For a comprehensive introduction on the physical background of \eqref{NLS0}, we refer to \cite{phy_double_crit_1,phy_double_crit_2} and the references therein.

From a mathematical point of view, we are particularly interested in problems with critical nonlinearities due to the following aspects: On the one hand, the nonlinear estimates for non-critical problems can usually be derived from the critical ones by means of interpolation; on the other hand, by dealing with critical problems additional symmetry operator such as dilation or Galilean boosts will also come into play, which makes the problem more challenging and interesting. The above mentioned reasons hence motivate our study on the mass-energy double critical NLS, whose mixed type nature also prevents any potential applications concerning scaling invariance property. At this point, we also refer the readers to the representative papers \cite{TaoVisanZhang,Akahori2013,MiaoDoubleCrit,killip_visan_soliton,Cheng2020,Carles_Sparber_2021,luo2021sharp} for scattering results of \eqref{NLS0}, in which at least one of the nonlinearities has critical growth.

We restrict our attention to the radial DFDCNLS \eqref{NLS}, which was studied by Cheng, Miao, Zhao \cite{MiaoDoubleCrit} in the case $d\leq 4$. The precise statement is as follows:
\begin{theorem}[\cite{MiaoDoubleCrit}]\label{thm miao}
Let $d\in\{3,4\}$. Define
\begin{align*}
\mH(u)&:=\frac{1}{2}\|\nabla u\|_2^2+\frac{1}{\tas}\|u\|_\tas^\tas-\frac{1}{\tbs}\|u\|_\tbs^\tbs,\nonumber\\
\mK(u)&:=\|\nabla u\|_2^2+\frac{d}{d+2}\|u\|_\tas^\tas-\|u\|_\tbs^\tbs
\end{align*}
and
\begin{align*}
\mA:=\{u\in H_{\mathrm{rad}}^1(\R^d):\mH(u)<d^{-1}\mS^{\frac{d}{2}},\mK(u)\geq 0\},
\end{align*}
where $\mS$ is the optimal constant of the Sobolev inequality, i.e.
\begin{align*}
\mS:=\inf_{\mathcal{D}^{1,2}(\R^d)\setminus\{0\}}\frac{\|\nabla u\|_2^2}{\|u\|_\tbs^2}.
\end{align*}
Then the unique solution $u$ of \eqref{NLS} with $u(0)\in\mA$ is global and scatters in time.
\end{theorem}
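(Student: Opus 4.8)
\textbf{Step 1: variational structure.} The plan is to run the concentration--compactness/rigidity scheme of Kenig--Merle, adapted to the mass--energy double critical setting. First I would extract the variational content of $\mA$: combining $\mK(u)\geq0$ with the sharp Sobolev inequality $\|u\|_\tbs^2\leq\mS^{-1}\|\nabla u\|_2^2$ yields, for $u\in\mA$, the coercivity bound $\tfrac1d\|\nabla u\|_2^2+\tfrac1{d+2}\|u\|_\tas^\tas\leq\mH(u)$ (so $\mH$ controls $\|\nabla u\|_2^2$ and $\|u\|_\tas^\tas$ from above), together with the rigidity statement: if $v\neq0$ and $\mK(v)=0$ then $\|\nabla v\|_2^2\geq\mS^{d/2}$, whence $\mH(v)\geq d^{-1}\mS^{d/2}$. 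Since $\mH$ is conserved, these facts imply that $\mA$ is flow-invariant (the sign $\mK\geq0$ cannot fail without crossing a nonzero zero of $\mK$, which the energy threshold forbids), that every solution with data in $\mA$ is global, and that $d^{-1}\mS^{d/2}$ is exactly the energy of the Aubin--Talenti bubble $W$ solving $-\Delta W=W^{\tbs-1}$ --- i.e.\ the precise level below which energy-critical bubbling is excluded. The defocusing mass-critical term is sign-definite and only improves these estimates.

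\textbf{Step 2: compactness.} Standard Strichartz estimates give local well-posedness in $H^1_{\mathrm{rad}}(\R^d)$, a small-data global scattering criterion, and a long-time perturbation (stability) lemma; for $d\in\{3,4\}$ both nonlinearities are Lipschitz ($\tbs-2\geq1$), so here this is the classical perturbation theory. Assuming the conclusion fails, I would then run the usual concentration--compactness argument --- a linear profile decomposition in $H^1_{\mathrm{rad}}$, the associated nonlinear profiles, and the perturbation lemma --- to produce a non-scattering solution $u_c$ with data in $\mA$ whose orbit $\{u_c(t):t\in\R\}$ is precompact in $H^1_{\mathrm{rad}}(\R^d)$. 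The two critical scalings are disposed of here because the scale-divergent profiles reduce, in the appropriate limit, to solutions of the focusing energy-critical NLS below the ground-state threshold or of the defocusing mass-critical NLS, each of which scatters --- by Kenig--Merle and Dodson respectively. From Step 1 and conservation of mass and energy, $u_c$ is global with $\sup_t\|u_c(t)\|_{H^1}<\infty$; and since the closure of the orbit cannot contain $0$ (else small-data theory would force scattering), precompactness combined with the rigidity of Step 1 gives $\mK(u_c(t))\geq\delta>0$ uniformly in $t$, and $\int_{|x|>R}(|\nabla u_c(t)|^2+|u_c(t)|^2+|u_c(t)|^\tas+|u_c(t)|^\tbs)\,dx\to0$ as $R\to\infty$, uniformly in $t$.

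\textbf{Step 3: rigidity.} Finally I would run a truncated virial argument. Choose $\phi$ smooth with $\phi(x)=|x|^2$ for $|x|\leq1$ and with bounded derivatives, and set $\phi_R(x)=R^2\phi(x/R)$, $V_R(t)=\int\phi_R(x)|u_c(t,x)|^2\,dx$. Conservation of mass and the uniform $\dot H^1$ bound give $|V_R'(t)|\lesssim R$, while the standard virial computation gives $V_R''(t)=8\,\mK(u_c(t))+r_R(t)$, where $r_R$ is supported --- through the derivatives of $\phi_R-|x|^2$ --- on $\{|x|>R\}$ and hence satisfies $|r_R(t)|\leq4\delta$ for all $t$ once $R$ is large, by the uniform tail smallness of Step 2. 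Therefore $V_R''(t)\geq4\delta$ for all $t$, so $V_R'(T)\geq V_R'(0)+4\delta T\to\infty$ as $T\to\infty$, contradicting $|V_R'(t)|\lesssim R$. Hence $u_c\equiv0$, which is absurd, so scattering holds and the theorem follows.

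\textbf{Main obstacle.} I expect Step 2 to be the crux: running the profile decomposition in the presence of two inequivalent critical scalings, identifying each scale-divergent profile with a solution of the correct limiting equation, and showing that the superposition of the resulting nonlinear profiles does not inflate the relevant Strichartz norms --- all of which rest on the long-time perturbation lemma. For $d\in\{3,4\}$ this is the classical Kenig--Merle / Tao--Visan--Zhang machinery; the reason it does not extend verbatim is precisely that a more robust perturbation theory is required for $d\geq5$, which is the content of the rest of the paper.
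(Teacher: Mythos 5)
Your sketch is essentially correct and matches the Kenig--Merle concentration--compactness/rigidity scheme that Cheng--Miao--Zhao actually carry out; the paper at hand does not reprove this theorem but simply cites \cite{MiaoDoubleCrit}, so there is no ``paper's own proof'' to compare against beyond that reference. You correctly identify the coercivity $\mH(u)\geq\frac1d\|\nabla u\|_2^2+\frac1{d+2}\|u\|_\tas^\tas$ when $\mK(u)\geq 0$ (which is an exact identity $\mH=\frac1d\|\nabla u\|_2^2+\frac1{d+2}\|u\|_\tas^\tas+\frac1\tbs\mK$), the interpretation of $d^{-1}\mS^{d/2}$ as the Aubin--Talenti energy, the two limiting regimes for the profile decomposition (focusing energy-critical below threshold and defocusing mass-critical, handled by Kenig--Merle and Dodson respectively), and the truncated virial with $V_R''=8\mK(u_c)+O(\text{tail})$, which all agree with the cited proof. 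One small slip: in Step 2 you write that both nonlinearities are Lipschitz because ``$\tbs-2\geq 1$''; the relevant condition is $\tas-2=\tfrac4d\geq1$, i.e.\ $d\leq4$, since it is the derivative of the \emph{mass-critical} nonlinearity $|u|^{4/d}u$ that loses Lipschitz continuity for $d\geq5$ and forces the fractional-calculus perturbation theory developed in the rest of the paper. You do, however, state this correctly in your ``Main obstacle'' paragraph, so the conceptual understanding is sound.
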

The main obstacle that prevents Theorem \ref{thm miao} to hold in $d\geq 5$ is the absence of a suitable long time perturbation theory. More precisely, since the gradient of the nonlinearity $|u|^{\frac{4}{d}}u$ is merely H\"older continuous for $d\geq 5$, the proof of the long time perturbation result for $d\leq 4$ is no longer valid. By appealing to fractional calculus we show that such a long time perturbation result  indeed continues to hold also in the case $d\geq 5$.
\begin{theorem}\label{long time pert}
Let $d\geq 5$ and let $u\in C(I;H^1(\R^d))$ be a solution of \eqref{NLS} defined on some interval $I\ni t_0$. Assume also that $w$ is an approximate solution of the following perturbed NLS
\begin{align}
i\pt_t w+\Delta w=|w|^{\frac{4}{d}}w-|w|^{\frac{4}{d-2}}w+e
\end{align}
such that
\begin{align}
\|u\|_{L_t^\infty H_x^1(I)}&\leq B_1,\label{condition c1}\\
\|u(t_0)-w(t_0)\|_{H^1}&\leq B_2,\label{condition c3}\\
\|w\|_{W_\tas\cap W_\tbs (I)}&\leq B_3\label{condition c2}
\end{align}
for some $B_1,B_2,B_3>0$. Then there exists some positive $\alpha=\alpha(B_1,B_2,B_3)\ll 1$ with the following property: if
\begin{align}
\| e^{i(t-t_0)\Delta}(u(t_0)-w(t_0))\|_{W_\tas(I)}&\leq \beta,\label{condition a}\\
\| |\nabla|^{\frac{4}{d+2}}e^{i(t-t_0)\Delta}(u(t_0)-w(t_0))\|_{X (I)}&\leq \beta,\label{condition aa}\\
\|\la \nabla \ra e\|_{L_{t,x}^{\frac{2(d+2)}{d+4}}(I)}&\leq\beta\label{condition b}
\end{align}
for some $0<\beta<\alpha$, then
\begin{align}
\|\la\nabla \ra u\|_{S(I)}\lesssim_{B_1,B_2,B_3}1.
\end{align}
\end{theorem}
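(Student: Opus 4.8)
The plan is to run a continuity (bootstrap) argument on a finite partition of $I$, with the standard nonlinear estimates upgraded by fractional calculus so as to absorb the Hölder-only regularity of $z\mapsto\nabla(|z|^{\frac4d}z)$ that occurs for $d\geq 5$. Write $v:=u-w$, so that
\[
i\pt_t v+\Delta v=\big(|u|^{\frac4d}u-|w|^{\frac4d}w\big)-\big(|u|^{\frac4{d-2}}u-|w|^{\frac4{d-2}}w\big)-e,\qquad v(t_0)=u(t_0)-w(t_0).
\]
By \eqref{condition c2} one splits $I$ into $N=N(B_3,\eta)$ consecutive intervals $I_j=[t_j,t_{j+1}]$ on which $\|w\|_{W_\tas\cap W_\tbs(I_j)}\leq\eta$, where the smallness $\eta\ll1$ and hence $N$ are to be fixed at the very end; from \eqref{condition c2}, \eqref{condition b}, the equation for $w$ and Strichartz one also records $\|\la\nabla\ra w\|_{S(I)}\lesssim_{B_3}1$. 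On each $I_j$ I would set up the continuity argument for
\[
\Theta_j:=\|\la\nabla\ra v\|_{S(I_j)}+\big\||\nabla|^{\frac4{d+2}}v\big\|_{X(I_j)},
\]
the auxiliary $X$-component being carried along because the fractional estimates below do not close on $\|\la\nabla\ra v\|_{S(I_j)}$ alone; the initial $H^1$-size of $v$ is $\leq B_2$ by \eqref{condition c3}.

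The core of the argument is the nonlinear estimate for the difference of the mass-critical terms. Since $G(z)=|z|^{\frac4d}z$ has a derivative that is merely Hölder continuous of order $\frac4d<1$ when $d\geq5$, one cannot move a full derivative through the difference by the fundamental theorem of calculus; instead I would write
\[
G(u)-G(w)=\int_0^1\big(\pt_zG(w+\theta v)\,v+\pt_{\bar z}G(w+\theta v)\,\bar v\big)\,d\theta
\]
and distribute $\la\nabla\ra$ across the product by a fractional Leibniz rule, invoking a fractional chain rule for the Hölder-continuous coefficient $\pt G(w+\theta v)$. Only a fraction of a derivative may be placed on that coefficient, the rest being absorbed by $v$ measured in the $X$-norm; this is precisely why \eqref{condition aa} controls $|\nabla|^{\frac4{d+2}}$ of the free evolution of $v(t_0)$ rather than a full derivative. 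The resulting bound has the schematic form
\[
\big\|\la\nabla\ra\big(G(u)-G(w)\big)\big\|_{\mathrm{dual}(I_j)}\lesssim_{B_1}\big(\|w\|_{W_\tas(I_j)}+\|v\|_{W_\tas(I_j)}\big)^{\frac4d}\,\Theta_j ,
\]
where the $L_t^\infty H_x^1$-bound \eqref{condition c1} controls the non-differentiated factors. The energy-critical difference $|u|^{\frac4{d-2}}u-|w|^{\frac4{d-2}}w$ is treated in the same spirit; for $d\geq7$ its derivative is again only Hölder continuous and the same fractional machinery applies, whereas for $d\in\{5,6\}$ it is Lipschitz and the classical difference estimate suffices. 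In every case one extracts a positive power of $\|w\|_{W_\tas\cap W_\tbs(I_j)}\leq\eta$, i.e. a small factor $\eta^{\theta}$, together with the extra Duhamel contribution of $\la\nabla\ra e$ controlled through \eqref{condition b}.

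Feeding these estimates together with \eqref{condition a}–\eqref{condition aa} into the Strichartz inequality applied to the Duhamel formula for $v$ on $I_j$ gives
\[
\Theta_j\lesssim D_j+\beta+\eta^{\theta}\Theta_j+\Theta_j^{1+\frac4{d-2}},
\]
where $D_j:=\|e^{i(t-t_j)\Delta}v(t_j)\|_{W_\tas(I_j)}+\||\nabla|^{\frac4{d+2}}e^{i(t-t_j)\Delta}v(t_j)\|_{X(I_j)}$ records the data at the left endpoint. Choosing $\eta$ small absorbs the linear term, and a standard continuity argument (valid as soon as $D_j+\beta$ is below an absolute threshold) yields $\Theta_j\lesssim D_j+\beta$ and hence $\|\la\nabla\ra u\|_{S(I_j)}\lesssim\|\la\nabla\ra w\|_{S(I_j)}+\Theta_j\lesssim_{B_3}1$. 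One then iterates over $j$: writing $v(t_{j+1})$ via Duhamel and applying Strichartz and the above nonlinear bounds propagates the endpoint data, $D_{j+1}\lesssim \beta+\sum_{k\leq j}\eta^{\theta}\Theta_k+\dots$, so a discrete Gronwall estimate gives $D_j\lesssim(2C)^{j}\beta$, which stays below the threshold for every $j<N$ provided $\beta<\alpha$ with $\alpha=\alpha(N)=\alpha(B_1,B_2,B_3)$ chosen small enough. Summing, $\|\la\nabla\ra u\|_{S(I)}\leq\sum_{j<N}\|\la\nabla\ra u\|_{S(I_j)}\lesssim_{B_1,B_2,B_3}1$, which is the claim. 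I expect the main difficulty to be the fractional Leibniz/chain-rule estimate for the mass-critical difference described above: one must choose the Hölder-exponent split and calibrate the auxiliary space $X$ so that every exponent that appears is a legitimate admissible Strichartz exponent and the whole multilinear estimate closes — this is the single place where the passage from $d\leq4$ to $d\geq5$ genuinely requires new input.
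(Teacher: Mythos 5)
Your outline reproduces the paper's strategy almost exactly: split $I$ using the finiteness of $\|w\|_{W_{\tas}\cap W_{\tbs}}$, establish $\|\la\nabla\ra w\|_{S(I)}\lesssim 1$, run a continuity (short-time perturbation) argument on each subinterval in which a \emph{subcritical} fractional derivative $|\nabla|^{4/(d+2)}$ in an exotic Strichartz space $X$ replaces the full gradient so that the H\"older-only regularity of $z\mapsto\nabla(|z|^{4/d}z)$ (and of $\nabla(|z|^{4/(d-2)}z)$ when $d\geq 7$) can be handled by the fractional chain rule for H\"older-continuous maps, and then iterate, propagating the free-evolution data from one subinterval to the next. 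This is precisely the architecture of the paper (Lemmas~\ref{fractional chain rule}--\ref{lemma of exotic 2}, Lemma~\ref{well posedness lemma}, Lemma~\ref{short time pert}, then iteration). You also correctly identify the role of condition~\eqref{condition aa} and the reason the estimate will not close on $\|\la\nabla\ra v\|_{S}$ alone.

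There is, however, one genuine mis-specification that would prevent your bootstrap from closing as written. You set
\[
\Theta_j=\|\la\nabla\ra v\|_{S(I_j)}+\||\nabla|^{4/(d+2)}v\|_{X(I_j)}
\]
and claim $\Theta_j\lesssim D_j+\beta$, i.e.\ that $\Theta_j$ is \emph{small}. But $\|\la\nabla\ra v\|_{S(I_j)}\gtrsim\|v(t_j)\|_{H^1}$, and for $j=0$ this is of size $B_2$, which the theorem allows to be $O(1)$. Thus $\Theta_j$ is only bounded, never small, and your superlinear term $\Theta_j^{1+4/(d-2)}$ cannot be absorbed; the continuity argument does not close with this choice. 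The paper's fix is to separate scales: Lemma~\ref{well posedness lemma} gives the \emph{bounded} quantities $\|\la\nabla\ra u\|_{S}$, $\|\la\nabla\ra w\|_{S}\lesssim B_1+B_2$ up front, and then the continuity argument (Lemma~\ref{short time pert}) runs purely on the \emph{small} scattering-type norms
\[
\|v\|_{Z\cap W_{\tas}(I_j)}+\||\nabla|^{4/(d+2)}v\|_{X(I_j)},
\]
with the bounded $S$-norms appearing only as $O(1)$ prefactors in the nonlinear terms (via interpolation inequalities such as \eqref{inter d} and \eqref{l2 interpolation}). Your $\Theta_j$ is also missing the $W_{\tas}$- and $Z$-components, which are the quantities the mass-critical term genuinely sees; the $X$-norm alone does not control them. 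A related minor point: the Strichartz bound on $w$ depends on $B_1,B_2$ as well (through $\|w(t_0)\|_{H^1}\leq B_1+B_2$), not only on $B_3$. With the bootstrap quantity redefined along the paper's lines, the rest of your iteration (the discrete Gronwall-type propagation of the endpoint data $D_j$) goes through exactly as you describe.
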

\begin{remark}
By interpolation, \eqref{condition a} and \eqref{condition aa} can be replaced by the stronger condition
\begin{align}
\| \la\nabla\ra e^{i(t-t_0)\Delta}(u(t_0)-w(t_0))\|_{W_\tas(I)}&\leq \beta,
\end{align}
which is the smallness condition given in the long time perturbation theory \cite[Prop. 3.2]{MiaoDoubleCrit}.
\end{remark}
For the precise definition of the function spaces defined in Theorem \ref{long time pert}, we refer to Section \ref{notations and definitions} below for details. The main challenge for proving Theorem \ref{long time pert} lies in the fact that both nonlinearities of \eqref{NLS} are endpoint critical nonlinearities and there is no chance to estimate one by another using interpolation. This will force us to directly derive suitable estimates for both of the nonlinearities using fractional calculus.

As a direct consequence, we immediately deduce the following generalization of Theorem \ref{thm miao}. The proof is a straightforward modification of the arguments from \cite{MiaoDoubleCrit}, thus we omit the details.
\begin{theorem}\label{thm miao luo}
Theorem \ref{thm miao} continues to hold for all $d\geq 5$.
\end{theorem}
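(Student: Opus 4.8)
The plan is to run the concentration--compactness/rigidity scheme of Kenig--Merle exactly as in \cite{MiaoDoubleCrit}, observing that the only ingredient there confined to $d\leq 4$ is the long time perturbation theory, which Theorem \ref{long time pert} now supplies for $d\geq 5$; after this single substitution every step is a verbatim repetition of \cite{MiaoDoubleCrit}, which is why the details are omitted. I would begin with the variational structure of $\mA$. Using the sharp Sobolev inequality one shows that on $\mA$ the conserved energy $\mH$ is comparable to $\|\nabla u\|_2^2+\|u\|_\tas^\tas$, with constants depending on the gap to $d^{-1}\mS^{\frac{d}{2}}$; that $\mK(u(t))\geq 0$ persists along the flow of \eqref{NLS}, since $\mK$ is continuous in $H^1$ and a zero of $\mK$ at a nonzero state would force $\mH\geq d^{-1}\mS^{\frac{d}{2}}$, contradicting conservation and the strict inequality defining $\mA$; and consequently that $\sup_{t}\|u(t)\|_{H^1}\leq B_1$ for a constant depending only on $u(0)$. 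All of this is purely variational and insensitive to the dimension beyond $d\geq 3$, so it carries over unchanged.

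Next I would set up the minimal non-scattering solution. Combining $H^1$ local well-posedness and small-data scattering (standard Strichartz) with the a priori bound above, and arguing by contradiction, one obtains a threshold value of the conserved quantities, strictly below $d^{-1}\mS^{\frac{d}{2}}$, at which scattering first fails. A linear profile decomposition for $e^{it\Delta}$ in $H^1(\R^d)$ together with the associated nonlinear profiles, the analysis of the degenerate regimes (the focusing energy-critical NLS in the concentrating regime and the defocusing mass-critical NLS in the dispersing regime, both of which scatter in the radial class), and the induction hypothesis reduce the failure to a critical element: a global, non-scattering solution $u_c$ with $u_c(0)\in\mA$ whose orbit $\{u_c(t):t\in\R\}$ is precompact in $H^1_{\mathrm{rad}}(\R^d)$. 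This is precisely the stage at which $d\leq 4$ was previously used, because summing the nonlinear profiles and transferring their Strichartz bounds to the genuine solution is a long time perturbation statement; Theorem \ref{long time pert} provides it in the form needed, with \eqref{condition c1}, \eqref{condition c3}, \eqref{condition c2} guaranteed by the uniform $H^1$ bound and the profile Strichartz norms, and the smallness conditions \eqref{condition a}, \eqref{condition aa}, \eqref{condition b} coming from orthogonality of profiles, frequency truncation, and the vanishing of the linear remainder. This is the only place where the fractional-calculus input valid for $d\geq 5$ is genuinely required; with it in hand there is no remaining obstacle, which is exactly why the proof reduces to a modification of \cite{MiaoDoubleCrit}.

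Finally I would eliminate the critical element by the rigidity argument of \cite{MiaoDoubleCrit}: $u_c$ is radial, has precompact orbit and satisfies $\mK(u_c(t))\geq 0$, which renders the (spatially localized) virial functional coercive. Running the truncated virial identity, using the radial Sobolev/Strauss embedding to absorb the cutoff errors — which is available in every dimension $d\geq 2$ — shows that a localized kinetic-type quantity is integrable in time, and almost periodicity then forces $\|\nabla u_c\|_2=0$, hence $u_c\equiv 0$, a contradiction; note that the defocusing mass-critical term only contributes extra positivity in the virial computation, so nothing new arises for $d\geq 5$. Therefore no critical element exists, scattering holds on all of $\mA$, and Theorem \ref{thm miao luo} follows.
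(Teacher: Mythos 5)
Your proposal is correct and matches the paper's intent exactly: the paper itself provides no details for Theorem~\ref{thm miao luo}, only the remark that it follows from \cite{MiaoDoubleCrit} by a straightforward modification once Theorem~\ref{long time pert} supplies the long time perturbation theory in $d\geq 5$. Your outline — variational coercivity on $\mA$, concentration--compactness extraction of a precompact critical element with the new stability theory plugged in at the profile-gluing step, and localized virial rigidity — is a faithful expansion of exactly that omitted argument, correctly identifying the perturbation theory as the only dimension-sensitive ingredient.
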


The rest of the paper is organized as follows: In Section \ref{notations and definitions} we introduce the notation and definitions which will be used throughout the paper. In Section \ref{pert section} we give the proof of Theorem \ref{long time pert}.

\subsection{Notations and definitions}\label{notations and definitions}
We use the notation $A\lesssim B$ whenever there exists some positive constant $C$ such that $A\leq CB$. Similarly we define $A\gtrsim B$ and we use $A\sim B$ when $A\lesssim B\lesssim A$. For an interval $I\subset \R$, the space $L_t^qL_x^r(I)$ is defined by
\begin{align*}
L_t^qL_x^r(I):=\{u:I\times \R^2\to\C:\|u\|_{L_t^qL_x^r(I)}<\infty\},
\end{align*}
where
\begin{align*}
\|u\|^q_{L_t^qL_x^r(I)}:=\int_{\R}\|u\|^q_r\,dt.
\end{align*}
When $q=r$, we simply write $L_{t,x}^q:=L_t^qL_x^r$. A pair $(q,r)$ is said to be $\dot{H}^s$-admissible with $s\in[0,1]$ if $q,r\in[2,\infty]$ and $\frac{2}{q}+\frac{d}{r}=\frac{d}{2}-s$. When $s=0$, we simply say the pair $(q,r)$ is $L^2$-admissible. For any $L^2$-admissible pairs $(q_1,r_1)$ and $(q_2,r_2)$ we have the following Strichartz estimate: if $u$ is a solution of
\begin{align*}
i\pt_t u+\Delta u=F(u)
\end{align*}
in $I\subset\R$ with $t_0\in I$ and $u(t_0)=u_0$, then
\begin{align*}
\|u\|_{L_t^q L_x^r(I)}\lesssim \|u_0\|_2+\|F(u)\|_{L_t^{q_2'} L_x^{r_2'}(I)},
\end{align*}
where $(q_2',r_2')$ is the H\"older conjugate of $(q_2,r_2)$. For a proof, we refer to \cite{EndpointStrichartz,Cazenave2003}. For $s\in\R$, the multipliers $|\nabla|^s$ and $\la\nabla\ra^s$ are defined by the symbols
\begin{align*}
|\nabla|^s f(x)&=\mathcal{F}^{-1}\bg(|\xi|^s\hat{f}(\xi)\bg)(x),\\
\la\nabla\ra^s f(x)&=\mathcal{F}^{-1}\bg((1+|\xi|^2)^{\frac{s}{2}}\hat{f}(\xi)\bg)(x).
\end{align*}
The following function spaces will be used throughout the paper:
\begin{align*}
W_{\tas}&:=L_{t,x}^{\frac{2(d+2)}{d}},\quad W_{\tbs}:=L_{t,x}^{\frac{2(d+2)}{d-2}},\\
V_{\tbs}&:=L_t^{\frac{2(d+2)}{d-2}}L_x^{\frac{2d(d+2)}{d^2+4}},\\
S&:=L_t^\infty L_x^2\cap L_t^2 L_x^{\tbs},\\
X&:=L_t^{\frac{d(d+2)}{2(d-2)}}L_x^{\frac{2d^2(d+2)}{d^3-4d+16}},\\
Y&:=L_t^{\frac{d}{2}}L_x^{\frac{2d^2(d+2)}{d^3+4d^2+4d-16}},\\
Z&:=L_t^{\frac{d(d+2)}{2(d-2)}}L_{x}^{\frac{2d^2(d+2)}{d^3+2d^2-8d+16}}.
%Z^*&:=L_t^{\frac{d(d+2)}{d^2+4}}L_{x}^{\frac{2d^2(d+2)}{d^3+4d^2+6d+16}}.
\end{align*}
One easily verifies using H\"older and Sobolev that
\begin{align}
\||\nabla|^{s}\bg(|u|^{\frac{4}{d}}u\bg)\|_{L_{t,x}^{\frac{2(d+2)}{d+4}}(I)}&\lesssim \||\nabla|^s u\|_{W_{\tas}(I)}\|u\|^{\frac{4}{d}}_{W_{\tas}(I)},\label{inter1}\\
%\||\nabla|^{s}\bg(|u|^{\frac{4}{d-2}}u\bg)\|_{L_t^2L_x^{\frac{2d}{d+2}}(I)}&\lesssim \||\nabla|^s u\|_{V_{\tbs}(I)}\|u\|^{\frac{4}{d-2}}_{W_{\tbs}(I)},\label{inter2}\\
\||\nabla|^{s}\bg(|u|^{\frac{4}{d-2}}u\bg)\|_{L_{t,x}^{\frac{2(d+2)}{d+4}}(I)}&\lesssim \||\nabla|^s u\|_{W_{\tas}(I)}\|u\|^{\frac{4}{d-2}}_{W_{\tbs}(I)}\label{inter3}
\end{align}
for $s\in\{0,1\}$, and
\begin{align}
\|u\|_{W_{\tbs}(I)}&\lesssim\|\nabla u\|_{V_\tbs(I)}\leq\|\nabla u\|_{L_t^\infty L_x^2(I)}^{\frac{2}{d}}\|\nabla u\|_{W_{\tas}(I)}^{1-\frac{2}{d}}\label{inter4}.
\end{align}
We also record here the following useful elementary inequalities: By fundamental calculus we have the following elementary inequality
\begin{align}
\bg|\bg|\sum_{j=1}^kz^j\bg|^{\alpha}-\sum_{j=1}^k|z^j|^{\alpha}\bg|&\lesssim\sum_{1\leq i,j\leq k,i\neq j}|z^i|^{\alpha-1}|z^j|\label{tele2}
\end{align}
for $z\in\C^k$ and $\alpha\in(1,\infty)$; For function $H(z)=|z|^{\frac{4}{d}}z$ we have
\begin{align}
|(H(u)-H(v)|\lesssim (|u|^{\frac{4}{d}}+|v|^{\frac{4}{d}})|u-v|\label{diff1}
\end{align}
for $d\geq3$,
\begin{align}
\bg|\nabla \bg(H(u)-H(v)\bg)\bg|\lesssim|u|^{\frac{4}{d}}|\nabla u-\nabla v|+|\nabla v|(|u|^\frac{4-d}{d}+|v|^\frac{4-d}{d})|u-v|\label{diff2}
\end{align}
for $d=3$, and
\begin{align}
\bg|\nabla \bg(H(u+v)-H(u)-H(v)\bg)\bg|\lesssim|u|^{\frac{4}{d}}|\nabla v|+|v|^{\frac{4}{d}}|\nabla u|\label{diff3}
\end{align}
for $d\geq 4$.
\section{Perturbation theory}\label{pert section}
In this section we prove Theorem \ref{long time pert}. The proof relies on a modification of the arguments involving fractional calculus given in \cite{KillipVisanNotes}. To proceed, we first record some auxiliary tools (Lemma \ref{fractional product rule} to Lemma \ref{lemma of exotic 1}). For details of their proofs, we refer to \cite{KillipVisanNotes} and the references therein. We will also restrict the space dimension $d$ to be larger than four in this section.
%Notice that $Z^*$ is the H\"older conjugate of $Z$ and
%\begin{align}
%\||\nabla|^s (|u|^{\frac{4}{d}}u)\|_{Z^*(I)}\leq& \|\la\nabla\ra u\|^{\frac{4}{d}}_{S(I)}\||\nabla|^s u\|_{Z(I)},\\
%\||\nabla|^s (|u|^{\frac{4}{d-2}}u)\|_{Z^*(I)}\leq& \|\la\nabla\ra u\|^{\frac{4}{d-2}}_{S(I)}\||\nabla|^s u\|_{Z(I)}
%\end{align}
%for $s\in\{0,1\}$.
\begin{lemma}\label{fractional product rule}
Let $s\in(0,1]$ and $q,q_1,q_2,q_3,q_4\in(1,\infty)$ with
\begin{align*}
\frac{1}{q}=\frac{1}{q_1}+\frac{1}{q_2}=\frac{1}{q_3}+\frac{1}{q_4}.
\end{align*}
Then
\begin{align}
\||\nabla|^s(uv)\|_{q}\lesssim\|u\|_{{q_1}}\||\nabla|^s v\|_{{q_2}}+\||\nabla|^s u\|_{{q_3}}\|v\|_{{q_4}}.
\end{align}
\end{lemma}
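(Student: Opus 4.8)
The statement to prove is the fractional product rule (Lemma \ref{fractional product rule}), which is a classical Kato--Ponce type inequality. Here is how I would approach it.

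\medskip

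\noindent\textbf{Plan of proof.} The goal is to establish
\begin{align*}
\||\nabla|^s(uv)\|_{q}\lesssim\|u\|_{q_1}\||\nabla|^s v\|_{q_2}+\||\nabla|^s u\|_{q_3}\|v\|_{q_4}
\end{align*}
for $s\in(0,1]$ and exponents as in the statement. The standard route is via a Littlewood--Paley decomposition combined with a paraproduct splitting. First I would dyadically decompose $u=\sum_N u_N$ and $v=\sum_M v_M$, where $u_N=P_N u$ denotes the Littlewood--Paley projection to frequencies $|\xi|\sim N$, and split the product $uv$ into three pieces: the ``high--low'' interaction $\sum_{N\gtrsim M} u_N v_M$, the symmetric ``low--high'' interaction $\sum_{M\gtrsim N} u_N v_M$, and (if one keeps the diagonal separate) the ``high--high'' interaction. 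By symmetry in $(u,q_1,q_3)\leftrightarrow(v,q_2,q_4)$ it suffices to bound the high--low term together with the high--high term, which after a further summation feeds into the first term on the right-hand side; the low--high term feeds into the second.

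\medskip

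\noindent\textbf{Key steps.} For the high--low piece, on each dyadic block $|\nabla|^s(u_N v_{\leq N})\sim N^s u_N v_{\leq N}$, and $N^s u_N = |\nabla|^s \tilde P_N u$ up to a harmless Fourier multiplier (a fattened projection $\tilde P_N$), so $\||\nabla|^s(u_N v_{\leq N})\|_q \lesssim \||\nabla|^s \tilde P_N u\|_{q_3}\|v_{\leq N}\|_{q_4}$ by H\"older, using $\tfrac1q=\tfrac1{q_3}+\tfrac1{q_4}$ and the uniform $L^{q_4}$ boundedness of the partial-sum projections (Mikhlin multiplier theorem, valid since $q_4\in(1,\infty)$). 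Summing in $N$ requires a little care: one uses almost-orthogonality of the $\tilde P_N u$ pieces in frequency together with the boundedness of the square function on $L^{q_3}$ (again $q_3\in(1,\infty)$), or equivalently one notes that the Fourier supports of the summands $|\nabla|^s(u_N v_{\leq N})$ have bounded overlap in each dyadic annulus, reducing an $\ell^1$ sum to an $\ell^2$ sum that is controlled by $\|P_N u\|_{\ell^2_N L^{q_3}}\lesssim\||\nabla|^s u\|_{q_3}$ — here the factor $N^s$ is absorbed into the $|\nabla|^s$ and the Littlewood--Paley square function estimate closes it. The low--high piece is handled identically with the roles of $u$ and $v$ swapped, producing $\||\nabla|^s u\|_{q_3}\cdots$ replaced by $\|u\|_{q_1}\||\nabla|^s v\|_{q_2}$. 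The high--high piece, where $N\sim M$, is the most flexible: there $|\nabla|^s(u_N v_M)$ can lose all frequency localization, but one writes $u_N v_M = (|\nabla|^{-s}|\nabla|^s u_N) v_M$ and distributes $|\nabla|^s$ to whichever factor is convenient; after summing the resulting geometric-type series in the low output frequency one again lands in either term on the right.

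\medskip

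\noindent\textbf{Main obstacle.} The genuinely delicate point is the summation of the dyadic pieces — ensuring that the $\ell^1$-in-$N$ sums that appear are legitimately upgraded to $\ell^2$ sums (or controlled directly) via Littlewood--Paley square function bounds and the Fefferman--Stein/Mikhlin machinery, all of which is exactly why the hypotheses require $q,q_1,\dots,q_4\in(1,\infty)$ strictly. The endpoint $s=1$ needs the observation that $N^{-1}|\nabla| P_N$ is a uniformly bounded family of multipliers, and the restriction $s\le 1$ keeps the symbol $|\xi|^s$ from interacting badly with the block structure (for $s>1$ one would need commutator corrections). Since this is an entirely standard result, in the paper one would simply cite \cite{KillipVisanNotes} and the references therein rather than reproduce the argument; the sketch above is the reason the citation is justified.
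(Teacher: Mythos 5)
The paper does not give a proof of this lemma at all: it is listed among the auxiliary tools (Lemma \ref{fractional product rule} through Lemma \ref{lemma of exotic 1}) for which the reader is simply referred to \cite{KillipVisanNotes} and the references therein. You correctly identify this at the end of your write-up, so in the sense that matters here your proposal matches the paper's approach exactly, and the Littlewood--Paley/paraproduct sketch you supply is the standard route to this Kato--Ponce type estimate.

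One small slip worth flagging: in your ``Plan'' paragraph you say the high--low piece (where $u$ carries the high frequency) feeds into the \emph{first} term $\|u\|_{q_1}\||\nabla|^s v\|_{q_2}$ and the low--high piece into the \emph{second}; it should be the other way around, since a derivative falling on the high-frequency factor $u$ produces $\||\nabla|^s u\|_{q_3}\|v\|_{q_4}$. You actually get this right in the ``Key steps'' paragraph, so it is just a transposition in the prose rather than an error in the argument. The high--high treatment is stated a bit loosely (distributing $|\nabla|^s$ ``to whichever factor is convenient'' glosses over the Bernstein step $\|P_K(u_N v_N)\|\lesssim K^s N^{-s}\||\nabla|^s u_N\|\,\|v_N\|$ followed by summing the geometric series in $K\lesssim N$), but the idea is the right one and the remaining summation in $N$ via square-function bounds is as you describe.
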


\begin{lemma}\label{fractional chain rule}
Let $G:\C\to \C$ be a $C^1$-function and let $s\in(0,1]$. Then for all $1<p,p_1,p_2<\infty$ with $\frac{1}{p}=\frac{1}{p_1}+\frac{1}{p_2}$ we have
\begin{align}
\||\nabla|^sG(u)\|_{p}\lesssim \|G'(u)\|_{p_1}\||\nabla|^s u\|_{p_2}.
\end{align}
\end{lemma}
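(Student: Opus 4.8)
\textbf{Proof proposal for Lemma \ref{fractional chain rule} (fractional chain rule).}

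The plan is to reduce everything to a Littlewood--Paley decomposition and handle the low- and high-frequency parts of $G(u)$ separately. Write $G(u)=\sum_{N}P_N G(u)$ where the sum runs over dyadic $N$. Since $\||\nabla|^s G(u)\|_p\sim \|(\sum_N N^{2s}|P_N G(u)|^2)^{1/2}\|_p$ by the square function characterization of $|\nabla|^s$, it suffices to control $N^s\|P_N G(u)\|_p$ in an $\ell^2$-summable fashion, or (more simply, since we only want an upper bound by a product of two norms rather than a sharp identity) to bound $N^s P_N G(u)$ pointwise by a maximal-type expression whose $L^p$ norm is controlled. The standard device is: for each fixed $N$, decompose $u=P_{\leq N}u+P_{>N}u=:u_{\mathrm{low}}+u_{\mathrm{high}}$ and write
\begin{align*}
P_N G(u) = P_N\big(G(u)-G(u_{\mathrm{low}})\big) + P_N G(u_{\mathrm{low}}).
\end{align*}
For the first term, the fundamental theorem of calculus gives $G(u)-G(u_{\mathrm{low}})=\int_0^1 G'(u_{\mathrm{low}}+\theta u_{\mathrm{high}})\,d\theta\cdot u_{\mathrm{high}}$, so $|G(u)-G(u_{\mathrm{low}})|\lesssim (\sup_{|w|\leq |u|}|G'(w)|)\,|u_{\mathrm{high}}|$, and since $|P_N f|\lesssim (Mf)(x)$ pointwise (Hardy--Littlewood maximal function), one bounds $N^s|P_N(G(u)-G(u_{\mathrm{low}}))|$ by $N^s M\big((\text{something})\cdot|P_{>N}u|\big)$. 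For the second term, since $G(u_{\mathrm{low}})$ is a smooth-ish function of a band-limited function one uses the Bernstein-type bound $\|P_N G(u_{\mathrm{low}})\|_p\lesssim N^{-1}\|\nabla G(u_{\mathrm{low}})\|_p\lesssim N^{-1}\|G'(u_{\mathrm{low}})\nabla u_{\mathrm{low}}\|_p$, together with $\|\nabla u_{\mathrm{low}}\|\sim \||\nabla|^{1-s}|\nabla|^s P_{\leq N}u\|$, which produces the extra factor $N^{s-1}$ needed to beat the $N^s$ prefactor.

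The key steps, in order: (i) set up the Littlewood--Paley square function characterization of $\||\nabla|^s G(u)\|_p$ and reduce to estimating $\{N^s P_N G(u)\}_N$ in $L^p(\ell^2)$; (ii) perform the high/low splitting of $u$ at frequency $N$ and the corresponding algebraic splitting of $P_N G(u)$ displayed above; (iii) estimate the ``difference'' term pointwise by $N^s M\big(|G'|_{*}\,|P_{>N}u|\big)$ where $|G'|_*$ is the relevant bound on $G'$ along the segment, then sum in $N$ using that $\{N^s P_{>N}u\}$ has $\ell^2$-norm (in $N$) comparable to $|\nabla|^s u$ — here one invokes the vector-valued Fefferman--Stein maximal inequality to pull the maximal function out of the $L^p(\ell^2)$ norm; (iv) estimate the ``smooth'' term via Bernstein: $N^s\|P_N G(u_{\mathrm{low}})\|\lesssim N^{s-1}\|G'(u_{\mathrm{low}})\,\nabla P_{\leq N}u\|$, bound $|G'(u_{\mathrm{low}})|\lesssim M(|G'(u)|)$ or simply by $\|G'(u)\|_{p_1}$ after the final Hölder split, and sum the geometric-type series in $N$ (the exponent $s-1<0$ against the frequencies $\leq N$ contributing $|\nabla|^s u$ converges); (v) at the very end apply Hölder with $\frac1p=\frac1{p_1}+\frac1{p_2}$ to separate $\|G'(u)\|_{p_1}$ from $\||\nabla|^s u\|_{p_2}$, which is legitimate because throughout we have kept the $G'$-factor and the $u$-factor multiplicatively separated.

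The main obstacle is the bookkeeping in step (iii)--(iv): one must arrange the pointwise bounds so that, after applying the vector-valued maximal inequality and Bernstein, the sum over dyadic $N$ of the two pieces genuinely closes with an $\ell^2$-structure matching $|\nabla|^s u$, and so that the $G'$-factor is never differentiated and can be peeled off by a single Hölder at the end. A secondary technical point is that $G$ is only assumed $C^1$, not $C^2$, so one cannot afford to hit $G(u_{\mathrm{low}})$ with a full second derivative; the Bernstein estimate $\|P_N f\|_p\lesssim N^{-1}\|\nabla f\|_p$ uses only one derivative and is exactly what makes the $C^1$ hypothesis sufficient. Once these are set up correctly the rest is routine; since the excerpt explicitly defers the detailed proof to \cite{KillipVisanNotes}, I would present the argument at the level of the decomposition and the two resulting estimates, citing the Fefferman--Stein inequality and Bernstein's inequality as standard.
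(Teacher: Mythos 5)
The paper does not actually prove this lemma: it is listed among the ``auxiliary tools'' whose proofs are deferred wholesale to \cite{KillipVisanNotes} ``and the references therein.'' Your sketch is the standard Christ--Weinstein Littlewood--Paley argument, which is indeed the proof strategy used in those references, so you have reconstructed the right approach rather than a different one. The architecture --- square-function reduction, high/low splitting of $u$ at frequency $N$, the algebraic decomposition $P_N G(u)=P_N\bigl(G(u)-G(u_{\mathrm{low}})\bigr)+P_N G(u_{\mathrm{low}})$, Fefferman--Stein for the first piece, one Bernstein derivative for the second, and a final H\"older split --- is all correct in broad strokes.

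Two points need tightening. First, a local slip: the points on the segment from $u_{\mathrm{low}}(x)$ to $u(x)$ are \emph{not} dominated in modulus by $|u(x)|$; one only has $|u_{\mathrm{low}}|\lesssim Mu$ pointwise, so the relevant control is $\sup_{|w|\lesssim Mu(x)}|G'(w)|$, not $\sup_{|w|\le |u(x)|}|G'(w)|$. Second, and more seriously, you do not explain how the segment supremum of $|G'|$ (or its maximal function) is converted into the stated factor $\|G'(u)\|_{p_1}$; this is precisely the crux of the lemma. For the power-type nonlinearities $G(z)=|z|^{4/d}z$ and $G(z)=|z|^{4/(d-2)}z$ actually used in the paper this is painless, because $|G'(z)|\sim|z|^{\alpha}$ is monotone in $|z|$ and so $\sup_{|w|\lesssim Mu}|G'(w)|\lesssim M\bigl(|G'(u)|\bigr)$, after which Fefferman--Stein and H\"older close the argument. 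For a generic $C^1$ function $G$, however, the segment supremum sees values of $G'$ that $G'(u)$ never attains, and this step does not go through as written; one must either impose a structural hypothesis of the above monotone/subadditive type on $G'$ or replace the naive mean-value bound by a Bony paralinearization of $G(u)$. Since the citation chain \cite{KillipVisanNotes} ultimately invokes Christ--Weinstein/Taylor, whose statements carry exactly such structure, your proof should make this hypothesis explicit rather than leaving the $G'$-factor as a vague ``$|G'|_*$''. Finally, the geometric summation in your step (iv) degenerates at $s=1$ (the factor $(M/N)^{1-s}$, $M\le N$, no longer decays); that endpoint should simply be disposed of separately via the ordinary chain rule and H\"older.
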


\begin{lemma}\label{fractional holder cont}
Let $h:\C\to\C$ be a H\"older continuous function of order $\alpha\in(0,1)$. Then for any $s\in(0,\alpha)$, $q\in(1,\infty)$ and $\sigma\in(\frac{s}{\alpha},1)$ we have
\begin{align}
\||\nabla|^s h(u)\|_{q}\lesssim\|u\|^{\alpha-\frac{s}{\sigma}}_{{(\alpha-\frac{s}{\sigma})q_1}}
\||\nabla|^\sigma u\|_{{\frac{s}{\sigma}q_2}}^{\frac{s}{\sigma}},
\end{align}
provided that $\frac{1}{q}=\frac{1}{q_1}+\frac{1}{q_2}$ and $(\alpha-\frac{s}{\sigma})q_1>\alpha$.
\end{lemma}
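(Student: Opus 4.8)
The plan is to reduce the estimate to a pointwise bound for the Littlewood--Paley pieces of $h(u)$ and then to interpolate between a crude bound and a bound that genuinely uses the fractional smoothness of $u$. Throughout, $M$ denotes the Hardy--Littlewood maximal operator, $P_N$ (with $N$ dyadic) the Littlewood--Paley projection onto frequencies $|\xi|\sim N$, and $\phi_N$ its convolution kernel, a Schwartz bump at scale $N^{-1}$ with $\int\phi_N=0$. By the Littlewood--Paley square function inequality, valid for $q\in(1,\infty)$,
\[
\||\nabla|^{s}h(u)\|_{q}\lesssim\bg\|\bg(\sum_{N}N^{2s}|P_N h(u)|^{2}\bg)^{1/2}\bg\|_{q},
\]
so it suffices to control $|P_N h(u)(x)|$ for each $N$. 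Using $\int\phi_N=0$ one writes $P_N h(u)(x)=\int\phi_N(x-z)\big(h(u(z))-h(u(x))\big)\,dz$, and the H\"older continuity of $h$ gives $|P_N h(u)(x)|\lesssim\int|\phi_N(x-z)|\,|u(z)-u(x)|^{\alpha}\,dz$.

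I would then estimate this in two ways. From $|u(z)-u(x)|^{\alpha}\le|u(z)|^{\alpha}+|u(x)|^{\alpha}$ (valid since $\alpha<1$) and the approximate-identity property of $\{|\phi_N|\}$, one gets the low-frequency bound $|P_N h(u)(x)|\lesssim M(|u|^{\alpha})(x)$. For the high-frequency bound I would invoke the pointwise interpolation inequality
\[
|u(z)-u(x)|\lesssim|z-x|^{\sigma}\big(M|\nabla|^{\sigma}u(x)+M|\nabla|^{\sigma}u(z)\big),
\]
proved by writing $u=I_{\sigma}(|\nabla|^{\sigma}u)$ (the Riesz potential, i.e. convolution with $c|x|^{\sigma-d}$) and splitting the kernel into its parts near and far from $z$ relative to $|z-x|$; raising to the power $\alpha<1$ to separate the two terms, inserting it, and using $\int|\phi_N(w)|\,|w|^{\sigma\alpha}\,dw\lesssim N^{-\sigma\alpha}$ together with the approximate-identity property for the translated term (now carrying the weight $|w|^{\sigma\alpha}$, which contributes the factor $N^{-\sigma\alpha}$), this yields $|P_N h(u)(x)|\lesssim N^{-\sigma\alpha}M(g^{\alpha})(x)$ with $g:=M|\nabla|^{\sigma}u$. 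Since $\sigma\alpha>s$ by hypothesis, the second bound is the favourable one for large $N$.

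Feeding $\min\{M(|u|^{\alpha})(x),\,N^{-\sigma\alpha}M(g^{\alpha})(x)\}$ into the square function, splitting the sum at a cut-off $N_0=N_0(x)$, summing the two geometric series (the first $\sim N_0^{2s}$, the second $\sim N_0^{2(s-\sigma\alpha)}$, using $s-\sigma\alpha<0$), and optimizing over $N_0$, I obtain the pointwise bound
\[
\bg(\sum_{N}N^{2s}|P_N h(u)(x)|^{2}\bg)^{1/2}\lesssim M(|u|^{\alpha})(x)^{1-\theta}M(g^{\alpha})(x)^{\theta},\qquad\theta:=\frac{s}{\sigma\alpha}\in(0,1).
\]
Then I take the $L^{q}_{x}$ norm, split by H\"older with $\tfrac1q=\tfrac1{q_1}+\tfrac1{q_2}$, and apply the maximal inequality on $L^{(1-\theta)q_1}$ (legitimate precisely because $(\alpha-\tfrac s\sigma)q_1>\alpha$, i.e. $(1-\theta)q_1>1$), on $L^{\theta q_2}$, and on $L^{\alpha\theta q_2}$ (the integrability conditions for the last two reducing to $\tfrac s\sigma q_2>1$), arriving at
\[
\||\nabla|^{s}h(u)\|_{q}\lesssim\|u\|_{(1-\theta)\alpha q_1}^{(1-\theta)\alpha}\||\nabla|^{\sigma}u\|_{\theta\alpha q_2}^{\theta\alpha}=\|u\|_{(\alpha-s/\sigma)q_1}^{\alpha-s/\sigma}\||\nabla|^{\sigma}u\|_{(s/\sigma)q_2}^{s/\sigma},
\]
since $(1-\theta)\alpha=\alpha-s/\sigma$ and $\theta\alpha=s/\sigma$, which is the claimed inequality.

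The hard part is the high-frequency bound: one genuinely needs the subcritical pointwise control of $|u(z)-u(x)|$ by $|z-x|^{\sigma}$ times maximal functions of $|\nabla|^{\sigma}u$ for a \emph{non-integer} exponent $\sigma$, and the convergence of the weighted kernel integral $\int|\phi_N(w)||w|^{\sigma\alpha}\,dw$ — both of which hinge on $0<s<\sigma\alpha$. Everything else is the standard one-parameter optimization and Hardy--Littlewood bookkeeping.
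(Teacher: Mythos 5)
The paper does not prove this lemma itself — it cites the Killip--Vi\c{s}an lecture notes, where the result is established via exactly the strategy you use: cancel $h(u(x))$ inside $P_N$, invoke H\"older continuity of $h$, bound the resulting integral two ways (maximal function of $|u|^\alpha$ for low $N$, the pointwise interpolation inequality $|u(z)-u(x)|\lesssim|z-x|^\sigma(M|\nabla|^\sigma u(x)+M|\nabla|^\sigma u(z))$ for high $N$), optimize over the cutoff frequency, and close with H\"older and Hardy--Littlewood. Your reconstruction is faithful and essentially correct.

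One small point worth flagging: the final chain of maximal-function bounds, $\|M(g^\alpha)\|_{\theta q_2}\lesssim\|g^\alpha\|_{\theta q_2}=\|g\|_{\alpha\theta q_2}^{\alpha}$ with $g=M|\nabla|^\sigma u$ and $\alpha\theta q_2=\tfrac{s}{\sigma}q_2$, requires $\tfrac{s}{\sigma}q_2>1$ (and a fortiori $\theta q_2>1$). This condition is not among the hypotheses listed in the lemma; it does not follow from $(\alpha-\tfrac{s}{\sigma})q_1>\alpha$ together with $q>1$ (one can make $1/q_1$ small and $1/q_2$ as large as $1/q$). It is, however, the natural integrability requirement for the quantity $\||\nabla|^\sigma u\|_{(s/\sigma)q_2}$ to even be a Banach-space norm, and it holds in the single place the paper invokes the lemma (there $\tfrac{s}{\sigma}q_2=\tfrac{2d(d+2)}{d^2+2d-10}>1$ for $d\ge 5$). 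So your proof is fine for the paper's purposes, but if you want the statement to be self-contained you should either add $\tfrac{s}{\sigma}q_2>1$ as a hypothesis or note explicitly that it is being assumed.
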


%\begin{lemma}
%Let $p\in[1,d)$ and $p^*:=\frac{dp}{d-p}$. Let $\alpha\in(0,1)$ and $\frac{1}{r}=\frac{1}{p^*}+\frac{\alpha}{d}$. Then
%\begin{align}
%\|g\|_{L^{p^*}}&\lesssim\||\nabla|^{\alpha}g\|_{L^r}\lesssim \|\nabla g\|_{L^p},\\
%\||\nabla|^{\alpha}g\|_{L^r}&\lesssim \|g\|_{L^{p^*}}^{1-\alpha}\|\nabla g\|_{L^p}^\alpha .
%\end{align}
%\end{lemma}

\begin{lemma}\label{lemma of exotic 3}
For any interval $I$ we have
\begin{align}
\||\nabla|^{\frac{4}{d+2}}u\|_{X(I)}&\lesssim \|u\|^{\frac{1}{d+2}}_{W_\tbs(I)}\|\nabla u\|^{\frac{d+1}{d+2}}_{S(I)}\lesssim \|\nabla u\|_{S(I)},\label{inter a}\\
\|u\|_{W_\tbs(I)}&\lesssim \||\nabla|^{\frac{4}{d+2}}u\|^{c}_{X(I)}\|\nabla u\|^{1-c}_{S(I)}\label{inter d}
\end{align}
for some $c=c(d)\in(0,1)$.
\end{lemma}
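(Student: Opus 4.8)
The plan is to derive \eqref{inter a} and \eqref{inter d} as interpolation inequalities among mixed space--time Lebesgue and homogeneous Sobolev norms, in the spirit of the exotic Strichartz estimates of \cite{KillipVisanNotes}. One structural remark drives everything: writing $r_W=\tfrac{2(d+2)}{d-2}$ (so $W_\tbs=L^{r_W}_{t,x}$), $q_X=\tfrac{d(d+2)}{2(d-2)}$ and $r_X=\tfrac{2d^2(d+2)}{d^3-4d+16}$, the spaces $W_\tbs$, $\,L_t^2\dot{W}^{1,\tbs}$, $\,L_t^\infty\dot{W}^{1,2}$ and $L_t^{q_X}\dot{W}^{4/(d+2),r_X}$ (this last being $X$ applied to $|\nabla|^{4/(d+2)}u$) are all at the $\dot H^1$-admissible scaling level for $u$, namely $\tfrac2q+\tfrac dr-s=\tfrac{d-2}{2}$. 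Hence complex interpolation stays at that level, and in any such interpolation fixing the time exponent and the regularity index automatically fixes the spatial exponent; what remains is only to check that the resulting parameters lie in the admissible ranges. This is the (purely computational) heart of the proof, and it is exactly the arithmetic for which the exponents of $X$ (and $Y$, $Z$) were reverse-engineered. No $|I|$-dependent constants arise, since everything is scaling invariant.

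The second inequality in \eqref{inter a} is easy: the exponent pair of $V_\tbs$ is $L^2$-admissible with time exponent in $(2,\infty)$, so interpolating the two components $L_t^\infty L_x^2$ and $L_t^2 L_x^\tbs$ of $S$ gives $\|\nabla u\|_{V_\tbs(I)}\lesssim\|\nabla u\|_{S(I)}$, and together with \eqref{inter4} this yields $\|u\|_{W_\tbs(I)}\lesssim\|\nabla u\|_{S(I)}$; then $\|u\|_{W_\tbs}^{1/(d+2)}\|\nabla u\|_S^{(d+1)/(d+2)}\lesssim\|\nabla u\|_S$ is immediate. (Alternatively, $Z$ is precisely the $L^2$-admissible pair at time-level $q_X$, so $\||\nabla|^{4/(d+2)}u\|_X\lesssim\|\nabla u\|_Z\lesssim\|\nabla u\|_S$ by one spatial Sobolev step.)

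For the first inequality in \eqref{inter a}, I would write $L_t^{q_X}\dot{W}^{4/(d+2),r_X}$ as the complex interpolation space with weights $\tfrac1{d+2}$ on $W_\tbs$ (zero derivatives) and $\tfrac{d+1}{d+2}$ on a space $L_t^a\dot{W}^{4/(d+1),b}$ of fractional regularity $\tfrac4{d+1}$, the latter choice being forced by $\tfrac{d+1}{d+2}\cdot\tfrac4{d+1}=\tfrac4{d+2}$. Matching the time exponent, $\tfrac1{q_X}=\tfrac1{d+2}\cdot\tfrac1{r_W}+\tfrac{d+1}{d+2}\cdot\tfrac1a$, determines $a$, and one checks $a\in[2,\infty)$ for every $d\geq5$; the spatial exponent $b$ then comes out by the scaling remark above and is precisely the Sobolev partner of an $L^2$-admissible pair $(a,\beta)$ under the regularity gap $1-\tfrac4{d+1}$, so that $\dot{W}^{1,\beta}\hookrightarrow\dot{W}^{4/(d+1),b}$ gives $\|u\|_{L_t^a\dot{W}^{4/(d+1),b}}\lesssim\|\nabla u\|_{L_t^a L_x^\beta}\lesssim\|\nabla u\|_S$. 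Feeding this and the $W_\tbs$-endpoint into the interpolation inequality yields \eqref{inter a}.

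Finally, \eqref{inter d} is the dual interpolation. The spatial Sobolev embedding $\dot{W}^{4/(d+2),r_X}\hookrightarrow L^{\rho_1}$ with $\tfrac1{\rho_1}=\tfrac1{r_X}-\tfrac4{d(d+2)}$ gives $\|u\|_{L_t^{q_X}L_x^{\rho_1}}\lesssim\||\nabla|^{4/(d+2)}u\|_X$, and $\dot{W}^{1,\tbs}\hookrightarrow L^{\rho_2}$ with $\rho_2=\tfrac{2d}{d-4}$ gives $\|u\|_{L_t^2 L_x^{\rho_2}}\lesssim\|\nabla u\|_{L_t^2 L_x^\tbs}\leq\|\nabla u\|_S$. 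Both target spaces and $W_\tbs$ sit at the $\dot H^1$-scaling level, so the weight fixed by the time exponents, $\tfrac1{r_W}=\tfrac c{q_X}+\tfrac{1-c}2$, i.e. $c=\tfrac{4d}{d^2-2d+8}$, automatically matches the spatial ones, and interpolating gives $\|u\|_{W_\tbs}\lesssim\||\nabla|^{4/(d+2)}u\|_X^{c}\|\nabla u\|_S^{1-c}$. Since $1-c=\tfrac{(d-2)(d-4)}{d^2-2d+8}$, one has $c\in(0,1)$ exactly for $d\geq5$, which is the hypothesis in force. I expect the only genuine work -- and it is bookkeeping rather than anything conceptual -- to be verifying, uniformly in $d\geq5$, that $a\in[2,\infty)$, that $(a,\beta)$ is a bona fide $L^2$-admissible pair, that the regularity gaps used lie in $(0,1)$, and that $\rho_1,\rho_2$ are finite with the exponents of $W_\tbs$ lying strictly between them; all of this can be carried out following \cite{KillipVisanNotes}.
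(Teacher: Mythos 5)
The paper does not give its own proof of this lemma: it is listed among the ``auxiliary tools'' (Lemma \ref{fractional product rule} through Lemma \ref{lemma of exotic 1}) for which the author simply refers to \cite{KillipVisanNotes}, so there is no in-paper argument to compare against. Your reconstruction is the expected one and I could not find a gap in it: the observation that $W_\tbs$, $L_t^2\dot W^{1,\tbs}$, $L_t^\infty\dot W^{1,2}$ and $L_t^{q_X}\dot W^{4/(d+2),r_X}$ all sit on the line $\frac2q+\frac dr-s=\frac{d-2}{2}$ does render the exponent bookkeeping automatic, and the pieces you invoke (mixed-norm H\"older in $t$, fractional Gagliardo--Nirenberg in $x$, homogeneous Sobolev embedding, interpolation along the $L^2$-admissible Strichartz line) are precisely the ingredients in the Killip--Vi\c{s}an treatment of the high-dimensional energy-critical stability theory. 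I verified that your weight $c=\frac{4d}{d^2-2d+8}$ for \eqref{inter d} is correct and lies in $(0,1)$ exactly for $d\geq 5$, that the intermediate pair $(a,\beta)$ with $\frac1a=\frac{(d-2)(3d+8)}{2d(d+1)(d+2)}$ satisfies $a\in(2,\infty)$ and $\beta\in(2,\frac{2d}{d-2})$, and that the Sobolev embeddings you use ($\dot W^{4/(d+2),r_X}\hookrightarrow L^{\rho_1}$ with $r_X<\frac{d(d+2)}{4}$, $\dot W^{1,\tbs}\hookrightarrow L^{2d/(d-4)}$, $\dot W^{1,\beta}\hookrightarrow\dot W^{4/(d+1),b}$) are all admissible for $d\geq5$. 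Two small points worth making explicit if you write this out: (i) the first chain in \eqref{inter a} is the genuinely nontrivial part and needs the \emph{fractional} Gagliardo--Nirenberg inequality applied pointwise in $t$, followed by H\"older in $t$, not an abstract appeal to complex interpolation of mixed Sobolev spaces (the two routes give the same exponents, but the pointwise-in-$t$ route is the elementary one); and (ii) the second inequality in \eqref{inter a} you can obtain even more directly by noting, as you do parenthetically, that $Z$ is the $L^2$-admissible pair sharing $q_X$ and $\dot W^{1,r_Z}\hookrightarrow\dot W^{4/(d+2),r_X}$, which avoids the detour through \eqref{inter4}.
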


\begin{lemma}\label{lemma of exotic 0}
For any interval $I$ we have
\begin{align}\label{inter e}
\bg\|\int_{t_0}^t |\nabla|^{\frac{4}{d+2}}e^{i(t-s)\Delta}f(s)\,ds\bg\|_{X(I)}\lesssim \||\nabla|^{\frac{4}{d+2}}f\|_{Y(I)}.
\end{align}
\end{lemma}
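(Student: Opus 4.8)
The plan is to read \eqref{inter e} as an \emph{inhomogeneous (``exotic'') Strichartz estimate} for a pair of non-$L^2$-admissible exponents and to establish it, following the scheme used for such estimates in \cite{KillipVisanNotes}, by combining the dispersive estimate, the one-dimensional Hardy--Littlewood--Sobolev inequality and the Christ--Kiselev lemma.

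First I would commute the Fourier multiplier $|\nabla|^{\frac{4}{d+2}}$ through the free propagator. Setting $g:=|\nabla|^{\frac{4}{d+2}}f$, the inequality \eqref{inter e} is equivalent to
\begin{align*}
\bg\|\int_{t_0}^t e^{i(t-s)\Delta}g(s)\,ds\bg\|_{X(I)}\lesssim \|g\|_{Y(I)}.
\end{align*}
Since the time-integrability exponent $\frac{d(d+2)}{2(d-2)}$ of $X$ is strictly larger than the time-integrability exponent $\frac{d}{2}$ of $Y$, the Christ--Kiselev lemma lets me discard the time cutoff, so it suffices to bound $\bg\|\int_{\R} e^{i(t-s)\Delta}g(s)\,ds\bg\|_{X(I)}$ by $\|g\|_{Y(I)}$.

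For this non-retarded estimate I would apply the dispersive estimate pointwise in $t$ together with Minkowski's inequality. Writing $r$ for the spatial exponent of $X$, one checks directly that the spatial exponent of $Y$ is exactly the Hölder conjugate $r'$, whence
\begin{align*}
\bg\|\int_{\R} e^{i(t-s)\Delta}g(s)\,ds\bg\|_{L_x^{r}}\lesssim \int_{\R} |t-s|^{-d(\frac12-\frac1r)}\,\|g(s)\|_{L_x^{r'}}\,ds .
\end{align*}
A short computation gives $d(\tfrac12-\tfrac1r)=\tfrac{(d+4)(d-2)}{d(d+2)}\in(0,1)$, so the kernel $|t-s|^{-d(\frac12-\frac1r)}$ is locally integrable; taking $L_t^{q}$-norms, with $q=\frac{d(d+2)}{2(d-2)}$ the time exponent of $X$, and invoking one-dimensional Hardy--Littlewood--Sobolev then yields the bound $\|g\|_{L_t^{d/2}L_x^{r'}(I)}=\|g\|_{Y(I)}$, because the time exponents of $X$ and $Y$ satisfy precisely the required scaling relation $\frac{2(d-2)}{d(d+2)}=\frac2d+\frac{(d+4)(d-2)}{d(d+2)}-1$.

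Thus the proof reduces to verifying three numerical facts about the exponents defining $X$, $Y$ and the order $\tfrac{4}{d+2}$ of the derivative: conjugacy of the spatial exponents, $d(\tfrac12-\tfrac1r)\in(0,1)$, and the Hardy--Littlewood--Sobolev relation for the time exponents; all of them hold by construction. There is no genuine obstacle here beyond this bookkeeping — the only point needing a moment's care is that the time exponents be correctly ordered so that both the Christ--Kiselev lemma and the Hardy--Littlewood--Sobolev step apply, and since $\frac d2<\frac{d(d+2)}{2(d-2)}$ for every $d\geq 3$ this is automatic in the range under consideration.
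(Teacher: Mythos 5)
Your proposal is correct: the commutation of $|\nabla|^{4/(d+2)}$ with the propagator, the reduction to the non-retarded operator via Christ--Kiselev (using that the time exponent $\tfrac{d(d+2)}{2(d-2)}$ of $X$ strictly exceeds $\tfrac d2$), and the dispersive-plus-Hardy--Littlewood--Sobolev argument are exactly the standard proof of this exotic Strichartz bound, and the three arithmetic identities you verify (spatial exponents of $X$ and $Y$ are H\"older conjugate; $d(\tfrac12-\tfrac1{r_X})=\tfrac{(d+4)(d-2)}{d(d+2)}\in(0,1)$; the HLS scaling relation between the time exponents) all check out. The paper does not reprove this lemma but points to the Killip--Vi\c{s}an lecture notes, where precisely this scheme is used, so your argument matches the intended route.
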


\begin{lemma}\label{lemma of exotic 1}
For any interval $I$ we have
\begin{align}\label{inter f}
\||\nabla|^{\frac{4}{d+2}}(|u|^{\frac{4}{d-2}}u)\|_{Y(I)}\lesssim \||\nabla|^{\frac{4}{d+2}}u\|^{\frac{d+2}{d-2}}_{X(I)}
\end{align}
and
\begin{align}\label{inter g}
&\,\||\nabla|^{\frac{4}{d+2}}(|u+w|^{\frac{4}{d-2}}v)\|_{Y(I)}\nonumber\\
\lesssim&\,\bg( \||\nabla|^{\frac{4}{d+2}}u\|^{\frac{8}{d^2-4}}_{X(I)}\|\nabla u\|^{\frac{4d}{d^2-4}}_{S(I)}+\||\nabla|^{\frac{4}{d+2}}w\|^{\frac{8}{d^2-4}}_{X(I)}\|\nabla w\|^{\frac{4d}{d^2-4}}_{S(I)}\bg)\||\nabla|^{\frac{4}{d+2}}v\|_{X(I)}.
\end{align}
\end{lemma}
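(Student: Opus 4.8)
The plan is to prove \eqref{inter f} and \eqref{inter g} by the ``exotic Strichartz'' bookkeeping of \cite{KillipVisanNotes}, combining the fractional product and chain rules (Lemmas \ref{fractional product rule} and \ref{fractional chain rule}), the fractional chain rule for H\"older continuous functions (Lemma \ref{fractional holder cont}), Sobolev embedding, and H\"older's inequality in time. Write $s=\tfrac4{d+2}$ and $X=L_t^{\tau_X}L_x^{\rho_X}$, $Y=L_t^{\tau_Y}L_x^{\rho_Y}$. The first step is to record the two scaling identities that make the exponents of $X$ and $Y$ compatible with the nonlinearity $|u|^{\frac4{d-2}}u$: the Sobolev embedding $\dot W^{s,\rho_X}(\R^d)\hookrightarrow L^{q}(\R^d)$ holds with $\tfrac1q=\tfrac1{\rho_X}-\tfrac sd=\tfrac{(d-2)^2(d+4)}{2d^2(d+2)}$, and $\tfrac1{\tau_Y}=\tfrac{d+2}{d-2}\cdot\tfrac1{\tau_X}=\tfrac2d$; both are elementary computations with the explicit exponents in the definitions of $X$ and $Y$.

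For \eqref{inter f}, I would apply the fractional chain rule (Lemma \ref{fractional chain rule}) in the space variable to $H(z)=|z|^{\frac4{d-2}}z$, which is a $C^1$ map on $\C$ because $\tfrac{d+2}{d-2}>1$ and which satisfies $|H'(z)|\lesssim|z|^{\frac4{d-2}}$, to obtain for a.e.\ $t$
\[
\||\nabla|^{s}H(u)\|_{L_x^{\rho_Y}}\lesssim\|u\|^{\frac4{d-2}}_{L_x^{\frac4{d-2}p_1}}\,\||\nabla|^{s}u\|_{L_x^{\rho_X}},\qquad\tfrac1{\rho_Y}=\tfrac1{p_1}+\tfrac1{\rho_X},
\]
with $p_1,\rho_X,\rho_Y\in(1,\infty)$ for $d\ge5$. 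Since the scaling identity above reads exactly $\tfrac1{\rho_X}-\tfrac sd=\big(\tfrac4{d-2}p_1\big)^{-1}$, Sobolev embedding turns the first factor into $\||\nabla|^{s}u\|_{L_x^{\rho_X}}$, so the integrand is $\lesssim\||\nabla|^{s}u\|^{\frac{d+2}{d-2}}_{L_x^{\rho_X}}$; a final H\"older inequality in time, using $\tfrac{d+2}{d-2}\cdot\tfrac1{\tau_X}=\tfrac1{\tau_Y}$, gives \eqref{inter f}.

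For \eqref{inter g}, I would set $g:=|u+w|^{\frac4{d-2}}$ and split via the fractional product rule (Lemma \ref{fractional product rule}),
\[
\||\nabla|^{s}(gv)\|_{L_x^{\rho_Y}}\lesssim\|g\|_{L_x^{m_1}}\,\||\nabla|^{s}v\|_{L_x^{\rho_X}}+\||\nabla|^{s}g\|_{L_x^{m_3}}\,\|v\|_{L_x^{m_4}},
\]
with $\tfrac1{\rho_Y}=\tfrac1{m_1}+\tfrac1{\rho_X}=\tfrac1{m_3}+\tfrac1{m_4}$ and $m_4=q$, so that $\|v\|_{L_x^{m_4}}\lesssim\||\nabla|^{s}v\|_{L_x^{\rho_X}}$. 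In the first term, $\|g\|_{L_x^{m_1}}=\|u+w\|^{\frac4{d-2}}_{L_x^{\frac4{d-2}m_1}}$ and the exponents again line up so that $\tfrac4{d-2}m_1=q$; Sobolev together with H\"older in time then gives $\||\nabla|^{s}(u+w)\|^{\frac4{d-2}}_X\,\||\nabla|^{s}v\|_X$, and $(a+b)^{\frac4{d-2}}\lesssim a^{\frac4{d-2}}+b^{\frac4{d-2}}$ together with the bound $\||\nabla|^{s}u\|_X\lesssim\|\nabla u\|_S$ from \eqref{inter a} --- which lets one trade the surplus power $\tfrac4{d-2}-\tfrac8{d^2-4}=\tfrac{4d}{d^2-4}$ of the $X$-norm for the $S$-norm --- reproduce the right-hand side of \eqref{inter g}. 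The second term is the hard part: one must bound $\||\nabla|^{s}g\|_{L_x^{m_3}}$ with $g=h(u+w)$, $h(z)=|z|^{\frac4{d-2}}$. For $d\ge7$ the function $h$ is H\"older continuous of order $\alpha:=\tfrac4{d-2}\in(0,1)$ and $s<\alpha$, so Lemma \ref{fractional holder cont} applies with an auxiliary exponent $\sigma\in(\tfrac{d-2}{d+2},1)$ and produces a factor $\||\nabla|^{\sigma}(u+w)\|^{s/\sigma}$ carrying the \emph{fractional} power $s/\sigma<1$; interpolating $|\nabla|^{\sigma}(u+w)$ between $|\nabla|^{s}(u+w)$ (in the $X$-exponents) and $\nabla(u+w)$ (in the $L^2$-admissible pair lying on the interpolation segment between the two exponents defining $S$), and passing the remaining factor of $u+w$ through Sobolev, one arrives at a bound of the form $\big(\||\nabla|^{s}(u+w)\|^{P}_X\|\nabla(u+w)\|^{Q}_S\big)\|v\|_{L_x^{m_4}}$ with $P+Q=\tfrac4{d-2}$; choosing $\sigma$ (and the H\"older split) so that $P\ge\tfrac8{d^2-4}$, trading the surplus of the $X$-power via \eqref{inter a}, and then applying H\"older in time and $(a+b)^{\frac4{d-2}}\lesssim a^{\frac4{d-2}}+b^{\frac4{d-2}}$ closes the estimate; here one should note that the temporal balance is automatic once every factor of $u+w$ is placed at time exponent $\tau_X$, since $\tfrac4{d-2}\cdot\tfrac1{\tau_X}=\tfrac8{d(d+2)}=\tfrac1{\tau_Y}-\tfrac1{\tau_X}$. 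For $d=6$ I would instead use $\||\nabla|^{s}|u+w|\|_{L_x^{m_3}}\lesssim\||\nabla|^{s}(u+w)\|_{L_x^{m_3}}$ (which follows from the Gagliardo seminorm characterisation of $\dot W^{s,p}$ and $\big||f(x)|-|f(y)|\big|\le|f(x)-f(y)|$), and for $d=5$ the ordinary fractional chain rule, since $|z|^{\frac4{d-2}}$ is then $C^1$; both low-dimensional cases are strictly easier than $d\ge7$. The main obstacle is precisely this second term for $d\ge7$: one has to verify simultaneously, for a single choice of $\sigma$, all the admissibility constraints in Lemma \ref{fractional holder cont} (in particular $(\alpha-\tfrac s\sigma)q_1>\alpha$), the Sobolev and interpolation scalings, and the exponent inequality $P\ge\tfrac8{d^2-4}$ --- i.e.\ exactly the low-regularity difficulty that the paper is built to overcome; everything else is exponent bookkeeping.
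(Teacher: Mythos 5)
The paper does not actually prove this lemma: Lemmas \ref{fractional product rule}--\ref{lemma of exotic 1} are quoted verbatim from \cite{KillipVisanNotes} with a pointer to that reference. So there is no in-paper proof to compare against, and I am evaluating your reconstruction on its own merits. Your plan is the right one and matches the Killip--Vi\c{s}an exotic-Strichartz argument: for \eqref{inter f}, the fractional chain rule at exponent $\rho_Y = p_1^{-1}+\rho_X^{-1}$, Sobolev at $\tfrac4{d-2}p_1 = q$, and H\"older in time at $\tfrac{d+2}{d-2}\cdot\tau_X^{-1}=\tau_Y^{-1}$; for \eqref{inter g}, the fractional Leibniz rule, handling the ``derivative on $g$'' piece by the $C^1$ chain rule for $d=5$, by $\big||f(x)|-|f(y)|\big|\le|f(x)-f(y)|$ for $d=6$, and by the H\"older-continuous chain rule (Lemma \ref{fractional holder cont}) together with spatial interpolation of $|\nabla|^\sigma$ between $|\nabla|^s$ at $\rho_X$ and $\nabla$ at an $L^2$-admissible exponent for $d\ge7$; and finally trading excess $X$-power for $S$-power via \eqref{inter a}. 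All the exponent identities you display are correct, and for $d=6$ one even has the coincidence $m_3=\rho_X$, so that case degenerates pleasantly.

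Two remarks on the one point you flag as ``the main obstacle''. First, the constraint $P\ge\tfrac8{d^2-4}$ that you propose to engineer by a careful choice of $\sigma$ is in fact automatic: with $\theta=\tfrac{1-\sigma}{1-s}$, the $S$-power one obtains is $Q=\tfrac{(\sigma-s)s}{(1-s)\sigma}$, which is increasing in $\sigma$ and hence satisfies $Q<s=\tfrac4{d+2}<\tfrac{4d}{d^2-4}$ for every admissible $\sigma\in(\tfrac{d-2}{d+2},1)$; consequently $P=\alpha-Q>\alpha-s=\tfrac{16}{d^2-4}>\tfrac8{d^2-4}$ always, and the trading step works without any tuning. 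Second, a step you do not mention is needed to reach the stated form of the right-hand side: after expanding $u+w$ by the triangle inequality and $(a+b)^\gamma\lesssim a^\gamma+b^\gamma$, the $d\ge7$ branch produces mixed terms of the shape $\||\nabla|^su\|^{\alpha-s/\sigma}_X\,\||\nabla|^\sigma w\|^{s/\sigma}$, and these must be symmetrised with Young's inequality (using $\big(\alpha-\tfrac s\sigma\big)+\tfrac s\sigma=\alpha$) before the trading step can be applied separately to the $u$-term and the $w$-term. What genuinely remains unverified in your write-up is the exponent admissibility for Lemma \ref{fractional holder cont} in the case $d\ge7$: one must check that there is $\sigma\in(\tfrac{d-2}{d+2},1)$ for which $(\alpha-\tfrac s\sigma)q_1=q$ (you verified $q>\alpha$, so the constraint $(\alpha-\tfrac s\sigma)q_1>\alpha$ is then free), $q_1,q_2\in(1,\infty)$ with $q_1^{-1}+q_2^{-1}=m_3^{-1}$, and the spatial exponent $\tfrac s\sigma q_2$ lies on the interpolation segment between $\rho_X$ and the $L^2$-admissible exponent at time $\tau_X$. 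These are three conditions on the single parameter $\sigma$, so it is a computation rather than a triviality; your proposal correctly identifies it as the crux but stops at the plan.
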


Next, we prove an exotic dual Strichartz estimate for the mass-critical term:
\begin{lemma}\label{lemma of exotic 2}
For any interval $I$ we have
\begin{align}\label{exotic 1}
\||\nabla|^{\frac{4}{d+2}}(|u|^{\frac{4}{d}}u)\|_{Y(I)}\lesssim \|u\|^{\frac{4(1-c)}{d}}_{S(I)}\|u\|^{\frac{4c}{d}}_{Z(I)}\||\nabla|^{\frac{4}{d+2}}u\|_{X(I)}
\end{align}
and
\begin{align}\label{exotic 2}
&\,\||\nabla|^{\frac{4}{d+2}}(|u+w|^{\frac{4}{d}}v)\|_{Y(I)}\nonumber\\
\lesssim&\,\bg(\|u\|^{\frac{4(1-c)}{d}}_{S(I)}\|u\|^{\frac{4c}{d}}_{Z(I)}
+\|w\|^{\frac{4(1-c)}{d}}_{S(I)}\|w\|^{\frac{4c}{d}}_{Z(I)}\bg)\||\nabla|^{\frac{4}{d+2}}v\|_{X(I)}\nonumber\\
&\,+\bg(\|\la\nabla\ra u\|^{\frac{4}{d+1}}_{S}+\|\la\nabla\ra w\|^{\frac{4}{d+1}}_{S}\bg)\bg(\|u\|^{\frac{4(1-c)}{d(d+1)}}_{S(I)}\|u\|^{\frac{4c}{d(d+1)}}_{Z(I)}+
\|w\|^{\frac{4(1-c)}{d(d+1)}}_{S(I)}\|w\|^{\frac{4c}{d(d+1)}}_{Z(I)}\bg)\nonumber\\
&\,\times \bg(\|v\|^{\frac{4}{d+1}}_{Z(I)}
\||\nabla|^{\frac{4}{d+2}}v\|^{\frac{d-3}{d+1}}_{X(I)}\bg)
\end{align}
for some $c=c(d)\in(0,1)$.
\end{lemma}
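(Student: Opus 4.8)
For both \eqref{exotic 1} and \eqref{exotic 2} the plan is to follow the scheme \emph{fractional calculus $+$ H\"older in time $+$ interpolation}. Write $s=\frac{4}{d+2}$ and $H(z)=|z|^{\frac4d}z$. What makes $d\geq5$ genuinely different from $d\leq4$ is that $\frac4d<1$: the map $H$ is only $C^1$ (with $|H'(z)|\lesssim|z|^{\frac4d}$), and $z\mapsto|z|^{\frac4d}$ is merely H\"older continuous of order $\alpha=\frac4d<1$.

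For \eqref{exotic 1}: since $H\in C^1$, apply the fractional chain rule (Lemma \ref{fractional chain rule}) at each fixed time, placing the factor $\||\nabla|^s u\|$ in the spatial Lebesgue exponent of $X$; using $|H'(z)|\lesssim|z|^{\frac4d}$ this gives, pointwise in $t$, an estimate of the form $\||\nabla|^s H(u)\|_{L^p_x}\lesssim\|u\|_{L^{p_1}_x}^{\frac4d}\||\nabla|^s u\|_{L^{p_2}_x}$, where $p$ and $p_2$ are the spatial exponents of $Y$ and $X$. Taking the $L_t^{d/2}$-norm and applying H\"older in time with two factors sends $\||\nabla|^s u\|$ into $\||\nabla|^{\frac4{d+2}}u\|_{X(I)}$ and leaves $\|u\|^{\frac4d}$ in a mixed norm $L_t^{\frac{d+2}{2}}L_x^r$; one checks that $(\frac{d+2}{2},r)$ is exactly the interpolant between the $L_t^2 L_x^{\tbs}$ component of $S$ and the space $Z$, with weights $1-c$ and $c$ for a suitable $c=c(d)\in(0,1)$ (the time and spatial exponents being simultaneously consistent). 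Bounding this interpolant by $\|u\|_{S(I)}^{1-c}\|u\|_{Z(I)}^{c}$ and raising to the power $\frac4d$ yields \eqref{exotic 1}.

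For \eqref{exotic 2}: set $g=|u+w|^{\frac4d}$ and split with the fractional product rule (Lemma \ref{fractional product rule}), namely $\||\nabla|^s(gv)\|_{L^q_x}\lesssim\|g\|_{L^{q_1}_x}\||\nabla|^s v\|_{L^{q_2}_x}+\||\nabla|^s g\|_{L^{q_3}_x}\|v\|_{L^{q_4}_x}$, to be integrated in time. In the first summand one has $\|g\|_{L^{q_1}_x}=\|u+w\|^{\frac4d}_{L^{4q_1/d}_x}\lesssim\|u\|^{\frac4d}_{L^{4q_1/d}_x}+\|w\|^{\frac4d}_{L^{4q_1/d}_x}$ (using $\frac4d<1$), so this term is handled exactly as in \eqref{exotic 1} (now with $v$ carrying the derivative) and produces the first line of the right-hand side of \eqref{exotic 2}. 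In the second summand, since $z\mapsto|z|^{\frac4d}$ is H\"older of order $\alpha=\frac4d$ and $s<\alpha$, we apply Lemma \ref{fractional holder cont} with the choice $\sigma=\frac{d+1}{d+2}\in(\frac{s}{\alpha},1)$, so that $\frac{s}{\sigma}=\frac4{d+1}$ and $\alpha-\frac{s}{\sigma}=\frac4{d(d+1)}$, and with $\tilde q_1>d+1$; this gives $\||\nabla|^s g\|_{L^{q_3}_x}\lesssim\|u+w\|^{\frac4{d(d+1)}}_{L^{\frac{4}{d(d+1)}\tilde q_1}_x}\||\nabla|^{\frac{d+1}{d+2}}(u+w)\|^{\frac4{d+1}}_{L^{\frac{4}{d+1}\tilde q_2}_x}$. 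The first factor is split between $S$ and $Z$ as in \eqref{exotic 1} (with the extra power $\frac1{d+1}$); the second is absorbed into $\|\la\nabla\ra u\|_{S}^{\frac4{d+1}}+\|\la\nabla\ra w\|_{S}^{\frac4{d+1}}$ by interpolating $\||\nabla|^{\frac{d+1}{d+2}}\cdot\|$ between the two $L^2$-admissible endpoints of $S$; and $\|v\|_{L^{q_4}_x}$ is controlled by $\|v\|_{Z(I)}^{\frac4{d+1}}\||\nabla|^{\frac4{d+2}}v\|_{X(I)}^{\frac{d-3}{d+1}}$ via Sobolev embedding and interpolation (in the spirit of Lemma \ref{lemma of exotic 3}). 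A final H\"older in time then produces the remaining two lines of \eqref{exotic 2}. The bookkeeping of the H\"older and interpolation exponents — checking that the various splits are simultaneously consistent and that all intermediate Lebesgue exponents lie in $(1,\infty)$ — is routine and is precisely what the definitions of $X$, $Y$, $Z$ were tailored for.

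The main obstacle is the loss of regularity caused by $\frac4d<1$: $H$ cannot be differentiated twice, so the term of \eqref{exotic 2} in which $|\nabla|^s$ falls on $|u+w|^{\frac4d}$ cannot be treated by a chain or product rule and must instead go through the H\"older-continuity estimate Lemma \ref{fractional holder cont}. The delicate point is to choose the auxiliary parameters ($\sigma$, $\tilde q_1$, $\tilde q_2$ for \eqref{exotic 2}, and the analogous exponents for \eqref{exotic 1}) so that the structural constraints $\sigma\in(\frac{s}{\alpha},1)$ and $(\alpha-\frac{s}{\sigma})\tilde q_1>\alpha$ hold while the resulting norms close up exactly into $S$, $Z$, $X$ and $\|\la\nabla\ra\cdot\|_S$ with the powers appearing in the statement.
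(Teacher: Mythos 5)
Your proposal is correct and follows essentially the same route as the paper's proof: for \eqref{exotic 1} the fractional chain rule (Lemma \ref{fractional chain rule}) using $|H'(z)|\lesssim|z|^{4/d}$, followed by the $S$-$Z$ interpolation of the resulting $L_t^{(d+2)/2}L_x^r$-norm of $u$; for \eqref{exotic 2} the split via the fractional product rule (Lemma \ref{fractional product rule}), the first summand treated as in \eqref{exotic 1} with $v$ carrying $|\nabla|^{4/(d+2)}$, and the second summand treated by the H\"older-continuity estimate (Lemma \ref{fractional holder cont}) with the very same choice $\sigma=\frac{d+1}{d+2}$, giving the powers $\frac{4}{d+1}$ and $\frac{4}{d(d+1)}$ that appear in the statement, and $\|v\|_{L_t^{q_4}L_x^{r_4}}$ controlled by $\|v\|_{Z}^{4/(d+1)}\||\nabla|^{4/(d+2)}v\|_X^{(d-3)/(d+1)}$ via H\"older and Sobolev. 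The only difference is that the paper carries out the exponent bookkeeping explicitly, which you defer as routine — that bookkeeping is exactly what you would want to supply to turn this sketch into a complete proof.
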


\begin{proof}
To simplify notations we omit the symbol $I$ in the following proof. First notice that using interpolation, for any $L^2$-admissible pair $(q,r)$ with $q\in(2,\tbs)$ we have
\begin{align}\label{l2 interpolation}
\|u\|_{L_t^q L_x^r}\lesssim \|u\|^{1-c}_{S}\|u\|^c_{Z}
\end{align}
for some $c=c(q,r)\in(0,1)$. For different $L^2$-admissible pairs $(q_1,r_1)$ and $(q_2,r_2)$ with $c_1=c(q_1,r_1)>c(q_2,r_2)=c_2$, we also have
\begin{align}
 \|u\|^{1-c_1}_{S}\|u\|^{c_1}_{Z}= \|u\|^{1-c_1}_{S}\|u\|^{c_1-c_2}_{Z}\|u\|^{c_2}_{Z}\lesssim  \|u\|^{1-c_2}_{S}\|u\|^{c_2}_{Z}.
\end{align}
We will thus refer to a unified (and possibly small) $c\in(0,1)$ by the application of \eqref{l2 interpolation} for different $L^2$-admissible pairs. Using Lemma \ref{fractional chain rule} we infer that
\begin{align}
&\,\||\nabla|^{\frac{4}{d+2}}(|u|^{\frac{4}{d}}u)\|_{Y}\nonumber\\
\lesssim &\,\|u^{\frac{4}{d}}\|_{L_t^{\frac{d(d+2)}{8}}L_x^{\frac{d^2(d+2)}{2(d^2+2d-8)}}}\||\nabla|^{\frac{4}{d+2}}u\|_{X}\nonumber\\
=&\,\|u\|^{\frac{4}{d}}_{L_t^{\frac{d+2}{2}}L_x^{\frac{2d(d+2)}{d^2+2d-8}}}\||\nabla|^{\frac{4}{d+2}}u\|_{X}\nonumber\\
\lesssim &\,\|u\|^{\frac{4(1-c)}{d}}_{S}\|u\|^{\frac{4c}{d}}_{Z}\||\nabla|^{\frac{4}{d+2}}u\|_{X},
\end{align}
which gives \eqref{exotic 1}. By Lemma \ref{fractional product rule} and \eqref{l2 interpolation} we have
\begin{align}
&\,\||\nabla|^{\frac{4}{d+2}}(|u+w|^{\frac{4}{d}}v)\|_{L_t^{\frac{d}{2}}L_x^{\frac{2d^2(d+2)}{d^3+4d^2+4d-16}}}\nonumber\\
\leq&\,\|(u+w)^{\frac{4}{d}}\|_{L_t^{\frac{d(d+2)}{8}}L_x^{\frac{d^2(d+2)}{2(d-2)(d+4)}}}\||\nabla|^{\frac{4}{d+2}}v\|_{X}\nonumber\\
&\,+\||\nabla|^{\frac{4}{d+2}}(|u+w|^{\frac{4}{d}})\|_{L_t^{\frac{d(d+2)}{8}}L_x^{\frac{d^2(d+1)(d+2)}{2(d^3+3d^2-8d-8)}}}
\|v\|_{L_t^{\frac{d(d+2)}{2(d-2)}}L_{x}^{\frac{2d^2(d+1)(d+2)}{d^4+d^3-4d^2+20d+16}}}\nonumber\\
=&\,\|u+w\|^{\frac{4}{d}}_{L_t^{\frac{(d+2)}{2}}L_x^{\frac{2d(d+2)}{(d-2)(d+4)}}}\||\nabla|^{\frac{4}{d+2}}v\|_{X}\nonumber\\
&\,+\||\nabla|^{\frac{4}{d+2}}(|u+w|^{\frac{4}{d}})\|_{L_t^{\frac{d(d+2)}{8}}L_x^{\frac{d^2(d+1)(d+2)}{2(d^3+3d^2-8d-8)}}}
\|v\|_{L_t^{\frac{d(d+2)}{2(d-2)}}L_{x}^{\frac{2d^2(d+1)(d+2)}{d^4+d^3-4d^2+20d+16}}}\nonumber\\
\lesssim &\,\bg(\|u\|^{\frac{4(1-c)}{d}}_{S}\|u\|^{\frac{4c}{d}}_{Z}
+\|w\|^{\frac{4(1-c)}{d}}_{S}\|w\|^{\frac{4c}{d}}_{Z}\bg)\||\nabla|^{\frac{4}{d+2}}v\|_{X}\nonumber\\
&\,+\||\nabla|^{\frac{4}{d+2}}(|u+w|^{\frac{4}{d}})\|_{L_t^{\frac{d(d+2)}{8}}L_x^{\frac{d^2(d+1)(d+2)}{2(d^3+3d^2-8d-8)}}}
\|v\|_{L_t^{\frac{d(d+2)}{2(d-2)}}L_{x}^{\frac{2d^2(d+1)(d+2)}{d^4+d^3-4d^2+20d+16}}}.\label{exotic 3}
\end{align}
It is left to estimate the second product in \eqref{exotic 3}. Using H\"older and Sobolev we obtain that
\begin{align}
&\,\|v\|_{L_t^{\frac{d(d+2)}{2(d-2)}}L_{x}^{\frac{2d^2(d+1)(d+2)}{d^4+d^3-4d^2+20d+16}}}\nonumber\\
\leq &\,\|v\|^{\frac{4}{d+1}}_{Z}
\|v\|^{\frac{d-3}{d+1}}_{L_t^{\frac{d(d+2)}{2(d-2)}}L_{x}^{\frac{2d^2(d+2)}{(d-2)^2(d+4)}}}\nonumber\\
\leq &\,\|v\|^{\frac{4}{d+1}}_{Z}
\||\nabla|^{\frac{4}{d+2}}v\|^{\frac{d-3}{d+1}}_{X}.\label{exotic 4}
\end{align}
On the other hand, by Lemma \ref{fractional holder cont} we know that
\begin{align}
&\,\||\nabla|^{\frac{4}{d+2}}(|u+w|^{\frac{4}{d}})\|_{L_x^{\frac{d^2(d+1)(d+2)}{2(d^3+3d^2-8d-8)}}}\nonumber\\
\leq &\,\||\nabla|^{\frac{d+1}{d+2}}(u+w)\|^{\frac{4}{d+1}}_{L_x^{\frac{2d(d+2)}{d^2+2d-10}}}
\|u+w\|^{\frac{4}{d(d+1)}}_{L_x^{\frac{2d(d+2)}{d^2+2d-8}}},
\end{align}
where we set
\begin{align*}
\sigma=\frac{d+1}{d+2},\quad q_1=\frac{d^2(d+1)(d+2)}{2(d^2+2d-8)},\quad q_2=\frac{d(d+1)(d+2)}{2(d^2+2d-10)}
\end{align*}
therein. Using H\"older, Sobolev and interpolation we finally conclude that
%\begin{align}
%\frac{8}{d(d+2)}=\frac{8}{d(d+1)(d+2)}+\frac{8}{(d+1)(d+2)}
%\end{align}
\begin{align}
&\,\||\nabla|^{\frac{4}{d+2}}(|u+w|^{\frac{4}{d}})\|_{L_t^{\frac{d(d+2)}{8}}L_x^{\frac{d^2(d+1)(d+2)}{2(d^3+3d^2-8d-8)}}}\nonumber\\
\leq&\,\||\nabla|^{\frac{d+1}{d+2}}(u+w)\|^{\frac{4}{d+1}}_{L_t^{\frac{d+2}{2}}L_x^{\frac{2d(d+2)}{d^2+2d-10}}}
\|u+w\|^{\frac{4}{d(d+1)}}_{L_t^{\frac{d+2}{2}}L_x^{\frac{2d(d+2)}{d^2+2d-8}}}\nonumber\\
\leq&\,\bg(\|\la\nabla\ra u\|^{\frac{4}{d+1}}_{S}+\|\la\nabla\ra w\|^{\frac{4}{d+1}}_{S}\bg)\nonumber\\
&\,\times\bg(\|u\|^{\frac{4(1-c)}{d(d+1)}}_{S}\|u\|^{\frac{4c}{d(d+1)}}_{Z}
+\|w\|^{\frac{4(1-c)}{d(d+1)}}_{S}\|w\|^{\frac{4c}{d(d+1)}}_{Z}\bg)\label{exotic 5}
\end{align}
for some $c\in(0,1)$. \eqref{exotic 3}, \eqref{exotic 4} and \eqref{exotic 5} then imply \eqref{exotic 2}.
\end{proof}

Next, we formulate a small data well-posedness result for \eqref{NLS}, which is slightly different from the standard one and is suitable for the proof of Lemma \ref{short time pert} given below.
\begin{lemma}\label{well posedness lemma}
For any $A>0$ there exists some $\beta>0$ such that the following is true: Suppose that $t_0\in I$ for some interval $I$. Suppose also that $u_0=u(t_0)\in H^1(\R^d)$ with
\begin{align*}
\|u_0\|_{H^1}\leq A
\end{align*}
and
\begin{align*}
\| e^{i(t-t_0)\Delta}u_0\|_{W_{\tas}(I)}+\||\nabla|^{\frac{4}{d+2}} e^{i(t-t_0)\Delta}u_0\|_{X(I)}\leq \beta.
\end{align*}
Then \eqref{NLS} has a unique solution $u\in C(I;H^1(\R^d))$ such that
\begin{align*}
\|\la\nabla\ra u\|_{S(I)}&\lesssim \|u_0\|_{H^1},\\
\|u\|_{W_{\tas}(I)}&\leq 2\| e^{i(t-t_0)\Delta}u_0\|_{W_{\tas}(I)},\\
\||\nabla|^{\frac{4}{d+2}} u\|_{X(I)}&\leq 2\| |\nabla|^{\frac{4}{d+2}}e^{i(t-t_0)\Delta}u_0\|_{X(I)}
\end{align*}
\end{lemma}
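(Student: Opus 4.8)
The plan is to construct $u$ by a contraction-mapping argument for the Duhamel map
\begin{align*}
\Phi(v)(t):=e^{i(t-t_0)\Delta}u_0-i\int_{t_0}^t e^{i(t-s)\Delta}\bg(|v|^{\frac4d}v-|v|^{\frac4{d-2}}v\bg)(s)\,ds
\end{align*}
on a ball whose radii are dictated by the three conclusions. Let $C_0$ be a Strichartz constant with $\|\la\nabla\ra e^{i(t-t_0)\Delta}u_0\|_{S(I)}\leq C_0\|u_0\|_{H^1}$, put $\rho:=\|e^{i(t-t_0)\Delta}u_0\|_{W_{\tas}(I)}$ and $\rho':=\||\nabla|^{\frac4{d+2}}e^{i(t-t_0)\Delta}u_0\|_{X(I)}$ (so that $\rho,\rho'\leq\beta$), and set
\begin{align*}
B:=\bg\{v:\ \|\la\nabla\ra v\|_{S(I)}\leq 2C_0A,\quad \|v\|_{W_{\tas}(I)}\leq 2\rho,\quad \||\nabla|^{\frac4{d+2}}v\|_{X(I)}\leq 2\rho'\bg\}.
\end{align*}
Then $e^{i(t-t_0)\Delta}u_0\in B$, and I would equip $B$ with the weaker metric $(v,\tilde v)\mapsto\|v-\tilde v\|_{W_{\tas}(I)}$, for which $B$ is a complete metric space: the other two constraints survive passage to $W_{\tas}(I)$-limits by weak-$*$ compactness of bounded sets in $L_t^\infty L_x^2$, weak compactness of bounded sets in the reflexive spaces $L_t^2 L_x^{\tbs}$ and $X(I)$, and uniqueness of distributional limits.

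\emph{Self-mapping $\Phi(B)\subseteq B$.} For the $\|\la\nabla\ra\cdot\|_S$-bound, use the inhomogeneous Strichartz estimate and then \eqref{inter1}, \eqref{inter3} with $s=1$ to bound the nonlinear term by $C\|\la\nabla\ra v\|_S\bg(\|v\|_{W_{\tas}}^{4/d}+\|v\|_{W_{\tbs}}^{4/(d-2)}\bg)$, where $\|v\|_{W_{\tbs}}\lesssim\||\nabla|^{\frac4{d+2}}v\|_X^{c}\|\la\nabla\ra v\|_S^{1-c}\lesssim\beta^{c}A^{1-c}$ by \eqref{inter d}; since also $\|v\|_{W_{\tas}}\leq2\beta$, the bracket is $\leq(2C)^{-1}$ once $\beta$ is small depending on $A$, whence $\|\la\nabla\ra\Phi(v)\|_S\leq C_0\|u_0\|_{H^1}+\tfrac12\|\la\nabla\ra v\|_S\leq 2C_0A$, and evaluating at the fixed point gives $\|\la\nabla\ra u\|_S\lesssim\|u_0\|_{H^1}$. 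The $\|\cdot\|_{W_{\tas}}$-bound is obtained the same way via dual Strichartz and \eqref{inter1}, \eqref{inter3} with $s=0$: now the nonlinear term carries the full extra factor $\|v\|_{W_{\tas}}\leq2\rho$, so it is $\leq\tfrac12\rho$ for $\beta$ small, closing $\|\Phi(v)\|_{W_{\tas}}\leq2\rho$. For the $\||\nabla|^{\frac4{d+2}}\cdot\|_X$-bound, use Lemma \ref{lemma of exotic 0}, then Lemma \ref{lemma of exotic 1}\,\eqref{inter f} for the energy-critical piece (contributing $\lesssim(2\rho')^{\frac{d+2}{d-2}}\leq\tfrac14\rho'$, the exponent exceeding $1$) and Lemma \ref{lemma of exotic 2}\,\eqref{exotic 1} for the mass-critical piece; in the latter the $L^2$-admissible factor produced by the fractional chain rule should be interpolated so as to expose a positive power of the small norm $\|v\|_{W_{\tas}}$ rather than of $\|v\|_S$ or $\|v\|_Z$ (which are only $\lesssim A$), giving a contribution $\lesssim\beta^{\theta}A^{c'}\||\nabla|^{\frac4{d+2}}v\|_X\leq\tfrac14\rho'$ with $\theta,c'>0$ for $\beta$ small depending on $A$. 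Hence $\Phi(B)\subseteq B$.

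\emph{Contraction.} I would estimate $\Phi(v)-\Phi(\tilde v)$ only in $\|\cdot\|_{W_{\tas}(I)}$, which is exactly what circumvents the obstruction flagged in the introduction that $\nabla(|v|^{4/d}v)$ is merely H\"older continuous for $d\geq5$: no full derivative ever acts on the difference. By dual Strichartz it remains to bound $\|(|v|^{4/d}v-|\tilde v|^{4/d}\tilde v)-(|v|^{4/(d-2)}v-|\tilde v|^{4/(d-2)}\tilde v)\|_{L_{t,x}^{2(d+2)/(d+4)}(I)}$, and \eqref{diff1} together with its energy-critical analogue and H\"older give the bound $C\bg(\|v\|_{W_{\tas}}^{4/d}+\|\tilde v\|_{W_{\tas}}^{4/d}+\|v\|_{W_{\tbs}}^{4/(d-2)}+\|\tilde v\|_{W_{\tbs}}^{4/(d-2)}\bg)\|v-\tilde v\|_{W_{\tas}}$, whose coefficient is $\leq(2C)^{-1}$ for $\beta$ small; thus $\Phi$ is a $\tfrac12$-contraction. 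Its unique fixed point $u\in B$ solves \eqref{NLS} in Duhamel form, satisfies the three asserted estimates by construction, lies in $C(I;H^1(\R^d))$ by continuity of the Duhamel integral and the Strichartz bound, and is unique in the whole of $C(I;H^1(\R^d))$ by the usual open--closed continuity argument on $I$.

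The step I expect to be the real obstacle is closing the $\||\nabla|^{\frac4{d+2}}\cdot\|_X$-bound: since both nonlinearities are endpoint critical and neither is interpolable by the other, the mass-critical term must be handled by Lemma \ref{lemma of exotic 2} in isolation, and the crux is to arrange the interpolation there so that the smallness is borne by the $W_{\tas}$-norm (governed by $\beta$) and not by an $S$- or $Z$-norm (governed only by $A$), keeping the bound consistent with the sharp radius $2\rho'$. The energy-critical term and the contraction step are comparatively routine once Lemmas \ref{lemma of exotic 0}--\ref{lemma of exotic 1} are available.
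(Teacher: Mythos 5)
Your proof follows essentially the same route as the paper's: the same Duhamel map, the same ball $B$ defined by the three asserted bounds, self-mapping via Strichartz combined with \eqref{inter1}, \eqref{inter3}, \eqref{inter d}, \eqref{inter f} and \eqref{exotic 1}, and --- crucially --- the same trick of interpolating the $Z$-factor in \eqref{exotic 1} against $W_{\tas}$ so that the smallness is carried by $\beta$ rather than by $A$. The only cosmetic difference is the contraction metric: you use $\|\cdot\|_{W_{\tas}(I)}$, whereas the paper uses $\|\cdot\|_{S(I)}$ and then observes in the contraction estimate that the H\"older step anyway only produces $\|u-v\|_{W_{\tas}(I)}\lesssim\|u-v\|_{S(I)}$; both choices avoid placing a derivative on the difference and are interchangeable here.
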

\begin{proof}
We define the space $B(I)$ by
\begin{align}
B(I):=\bg\{u\in L_t^\infty H_x^1(I):\|\la\nabla\ra u\|_{S(I)}&\leq 2C \|u_0\|_{H^1},\nonumber\\
\|u\|_{W_{\tas}(I)}&\leq 2\| e^{it\Delta}u_0\|_{W_{\tas}(I)}\nonumber\\
\||\nabla|^{\frac{4}{d+2}} u\|_{X(I)}
&\leq 2\||\nabla|^{\frac{4}{d+2}} e^{it\Delta}u_0\|_{X(I)}
\bg\}.
\end{align}
One easily checks that the set $B(I)$ equipped with the metric $\rho$ defined by
$$\rho(u,v):=\|u-v\|_{S(I)}$$
is a complete metric space. Now define the operator $\Phi$ by
\begin{align}
\Phi(u):= e^{i(t-t_0)\Delta}u_0-i\int_{t_0}^t e^{i(t-s)\Delta}(|u|^{\frac{4}{d}}u-|u|^{\frac{4}{d-2}}u)\,ds.
\end{align}
We show that $\Phi$ is a contraction on $B(I)$. Using \eqref{inter d} we infer that there exists some $c\in(0,1)$ such that
\begin{align}
\|u\|_{W_\tbs(I)}\leq (2CA)^{1-c}\||\nabla|^{\frac{4}{d+2}}u\|^c_{X(I)}\leq (2CA)^{1-c}2^{c}\beta^c.\label{new 3}
\end{align}
Combining with Strichartz, \eqref{inter1} and \eqref{inter3} we deduce that
\begin{align}
&\,\|\la\nabla\ra \Phi(u)\|_{S(I)}\nonumber\\
\leq &\,\|\la\nabla\ra e^{it\Delta}u_0\|_{S(I)}+C\sum_{s=1}^2\bg(\||\nabla|^s\bg(|u|^{\frac{4}{d}} u\bg)\|_{L_{t,x}^{\frac{2(d+2)}{d+4}}(I)}\bg)\nonumber\\
&\,+C\sum_{s=1}^2\bg(\||\nabla|^s\bg(|u|^{\frac{4}{d-2}} u\bg)\|_{L_{t,x}^{\frac{2(d+2)}{d+4}}(I)}\bg)\nonumber\\
\leq&\, \|\la\nabla\ra e^{it\Delta}u_0\|_{S(I)}+C\|\la\nabla\ra u\|_{W_{\tas}(I)}\|u\|^{\frac{4}{d}}_{W_{\tas}(I)}+C\|\la\nabla\ra u\|_{W_{\tas}(I)}\|u\|^{\frac{4}{d-2}}_{W_{\tbs}(I)}\nonumber\\
\leq&\, \|\la\nabla\ra e^{it\Delta}u_0\|_{S(I)}+C\|\la\nabla\ra u\|_{S(I)}\|u\|^{\frac{4}{d}}_{W_{\tas}(I)}+C\|\la\nabla\ra u\|_{S(I)}\|u\|^{\frac{4}{d-2}}_{W_{\tbs}(I)}\nonumber\\
\leq& \,C\| u_0\|_{H^1}+2C((2\beta)^{\frac{4}{d}}+((2CA)^{1-c}2^{c}\beta^c)^{\frac{4}{d-2}})\|u_0\|_{H^1}.\label{modi1}
\end{align}
Analogously, we have
\begin{align}
&\,\|\Phi(u)\|_{W_{\tas}(I)}\nonumber\\
\leq&\, \| e^{it\Delta}u_0\|_{W_\tas(I)}+C\||u|^{\frac{4}{d}} u\|_{L_{t,x}^{\frac{2(d+2)}{d+4}}(I)}
+C\||u|^{\frac{4}{d-2}} u\|_{L_{t,x}^{\frac{2(d+2)}{d+4}}(I)}\nonumber\\
\leq&\, \|e^{it\Delta}u_0\|_{W_\tas(I)}+C\|u\|^{1+\frac{4}{d}}_{W_{\tas}(I)}
+C\| u\|_{W_{\tas}(I)}\|u\|^{\frac{4}{d-2}}_{W_{\tbs}(I)}\nonumber\\
\leq& \, \|e^{it\Delta}u_0\|_{W_\tas(I)}+2C((2\beta)^{\frac{4}{d}}+((2CA)^{1-c}2^{c}\beta^c)^{\frac{4}{d-2}})\|e^{it\Delta}u_0\|_{W_\tas(I)}.\label{modi2}
\end{align}
Using \eqref{inter f} and \eqref{exotic 1} we see that
\begin{align}
&\,\||\nabla|^{\frac{4}{d+2}}\Phi(u)\|_{X(I)}\nonumber\\
\leq&\,\| |\nabla|^{\frac{4}{d+2}}e^{it\Delta}u_0\|_{X(I)}+C\||\nabla|^{\frac{4}{d+2}}\bg(|u|^{\frac{4}{d}} u\bg)\|_{Y(I)}
+C\||\nabla|^{\frac{4}{d+2}}\bg(|u|^{\frac{4}{d-2}} u\bg)\|_{Y(I)}\nonumber\\
\leq&\,\| |\nabla|^{\frac{4}{d+2}}e^{it\Delta}u_0\|_{X(I)}+C\||\nabla|^{\frac{4}{d+2}}u\|^{\frac{d+2}{d-2}}_{X(I)}
+C(2CA)^{\frac{4(1-c)}{d}}\|u\|^{\frac{4c}{d}}_{Z(I)}\||\nabla|^{\frac{4}{d+2}}u\|_{X(I)}.\label{modi3}
\end{align}
Since $Z$ corresponds to an $L^2$-admissible pair, we know that there exists some $\kappa\in(0,1)$ such that
\begin{align}
\|u\|_{Z(I)}\leq \|u\|^{1-\kappa}_{S(I)}\|u\|^\kappa_{W_\tas(I)}.
\end{align}
Summing up we have
\begin{align}
&\,\||\nabla|^{\frac{4}{d+2}}\Phi(u)\|_{X(I)}\nonumber\\
\leq&\,\| |\nabla|^{\frac{4}{d+2}}e^{it\Delta}u_0\|_{X(I)}+2C(2\beta)^{\frac{4}{d-2}}\| |\nabla|^{\frac{4}{d+2}}e^{it\Delta}u_0\|_{X(I)}\nonumber\\
&\,+2C(2CA)^{\frac{4(1-c)}{d}}(2CA)^{\frac{4c(1-\kappa)}{d}}(2\beta)^{\frac{4c\kappa}{d}}
\||\nabla|^{\frac{4}{d+2}}e^{it\Delta}u_0\|_{X(I)},
\end{align}
Hence by choosing ${\beta}$ sufficiently small we see that $\Phi$ maps $B(I)$ into itself. In a similar way, using \eqref{diff1} followed by Strichartz, H\"older and \eqref{new 3} we obtain that
\begin{align}
&\,\|\Phi(u)-\Phi(v)\|_{S(I)}\nonumber\\
\leq &\,C(\|u\|^{\frac{4}{d}}_{W_\tas(I)}+\|v\|^{\frac{4}{d}}_{W_\tas(I)}+\|u\|^{\frac{4}{d-2}}_{W_\tbs(I)}+\|v\|^{\frac{4}{d-2}}_{W_\tbs(I)})\|u-v\|_{W_\tas(I)}\nonumber\\
\leq&\,C\bg(\|u\|^{\frac{4}{d}}_{W_\tas(I)}+\|v\|^{\frac{4}{d}}_{W_\tas(I)}\nonumber\\
&\,+(2CA)^{\frac{4(1-c)}{d-2}}\||\nabla|^{\frac{4}{d+2}}u\|^{\frac{4c}{d-2}}_{X(I)}+(2CA)^{\frac{4(1-c)}{d-2}}\||\nabla|^{\frac{4}{d+2}}v\|^{\frac{4c}{d-2}}_{X(I)}\bg)
\|u-v\|_{S(I)}\nonumber\\
\leq&\,2C\big((2{\beta})^{\frac{4}{d}}+(2CA)^{\frac{4(1-c)}{d-2}}(2\beta)^{\frac{4c}{d-2}}\big)\|u-v\|_{S(I)}.
\end{align}
Thus choosing even smaller ${\beta}$ if necessary we infer that $\Phi$ is a contraction on $B(I)$. Now the existence and uniqueness of a solution $u$ of \eqref{NLS} are ensured by the Banach fixed point theorem. The continuity of $u$ follows immediately from the fact that $u$ satisfies the integral equation.
\end{proof}

\begin{lemma}[Short time perturbation]\label{short time pert}
Let $u\in C(I;H^1(\R^d))$ be a solution of \eqref{NLS} defined on some interval $I\ni t_0$. Assume also that $w$ is an approximate solution of the following perturbed NLS
\begin{align}
i\pt_t w+\Delta w=|w|^{\frac{4}{d}}w-|w|^{\frac{4}{d-2}}w+e
\end{align}
such that
\begin{align}
\|w\|_{L_t^\infty H_x^1(I)}\leq B_1,\quad \|u(t_0)-w(t_0)\|_{H^1}\leq B_2
\end{align}
for some $B_1,B_2>0$. Then there exist some positive $\beta_0,\beta_1\ll  1$, depending on $B_1$ and $B_2$, with the following property: if
\begin{align}\label{condition 1}
\||\nabla|^{\frac{4}{d+2}}w\|_{X(I)}+\|\la\nabla\ra w\|_{W_\tas(I)}\leq\beta_0
\end{align}
and
\begin{align}
\| e^{i(t-t_0)\Delta}(u(t_0)-w(t_0))\|_{W_\tas(I)}&\leq \beta,\label{condition 3}\\
\||\nabla|^{\frac{4}{d+2}} e^{i(t-t_0)\Delta}(u(t_0)-w(t_0))\|_{X(I)}&\leq \beta,\label{condition 2}\\
\|\la \nabla \ra e\|_{L_{t,x}^{\frac{2(d+2)}{d+4}}(I)}&\leq\beta\label{condition 4}
\end{align}
for some $0<\beta\leq \beta_1$, then
\begin{align}
\| u-w\|_{Z\cap W_\tas (I)}&\lesssim \beta^\kappa,\label{short time 2}\\
\||\nabla|^{\frac{4}{d+2}}(u-w)\|_{X(I)}&\lesssim \beta^{\kappa},\label{short time 1}\\
\|F(u)-F(w)\|_{L_{t,x}^{\frac{2(d+2)}{d+4}}(I)}&\lesssim \beta^\kappa,\label{short time 3}\\
\||\nabla|^{\frac{4}{d+2}}(F(u)-F(w))\|_{Y(I)}&\lesssim \beta^\kappa,\label{short time 6}\\
\|\la \nabla\ra u\|_{S(I)}+\|\la \nabla\ra w\|_{S(I)}&\lesssim B_1+B_2\label{short time 5}
\end{align}
for some $\kappa\in(0,1)$. Here $F(z)=-|z|^{\frac{4}{d}}z+|z|^{\frac{4}{d-2}}z$.
\end{lemma}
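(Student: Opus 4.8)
The plan is to set up a contraction/bootstrap argument on $v:=u-w$, exactly parallel to the proof of Lemma~\ref{well posedness lemma}, but now tracking the error term $e$ and the difference of nonlinearities rather than the full nonlinearity. Write the Duhamel formula for $v$:
\begin{align*}
v(t)=e^{i(t-t_0)\Delta}v(t_0)-i\int_{t_0}^t e^{i(t-s)\Delta}\bg(F(u)-F(w)-e\bg)\,ds,
\end{align*}
and recall $F(u)-F(w)=F(w+v)-F(w)$. Split this as the mass-critical difference plus the energy-critical difference. For the energy-critical part use \eqref{inter g} from Lemma~\ref{lemma of exotic 1} together with \eqref{diff1} and \eqref{diff3} in the non-gradient norms; for the mass-critical part use \eqref{exotic 2} from Lemma~\ref{lemma of exotic 2} together with \eqref{diff1}. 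The point of those two lemmas is precisely that they produce, on the right-hand side, factors of $\||\nabla|^{\frac{4}{d+2}}v\|_X$ and $\|v\|_Z$ multiplied by quantities built from $u$ and $w$ which can be made small: $w$ is small by \eqref{condition 1}, and $u$ will be controlled by Lemma~\ref{well posedness lemma} on the subinterval (its Strichartz norms are $\lesssim B_1+B_2$ and its $X$, $W_\tas$ norms are $\lesssim\beta$).

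The steps, in order: First, apply Lemma~\ref{well posedness lemma} with $A\sim B_1+B_2$ to the initial data $u(t_0)$; the hypotheses \eqref{condition 3}, \eqref{condition 2} give the required smallness of $\|e^{i(t-t_0)\Delta}u(t_0)\|_{W_\tas}+\||\nabla|^{\frac{4}{d+2}}e^{i(t-t_0)\Delta}u(t_0)\|_X$ once $\beta_1$ is small and $\beta_0$ is absorbed, so $u$ exists on $I$ with $\|\la\nabla\ra u\|_S\lesssim B_1+B_2$, $\|u\|_{W_\tas}\lesssim\beta$, $\||\nabla|^{\frac{4}{d+2}}u\|_X\lesssim\beta$; this already yields \eqref{short time 5}. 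Second, estimate $\|v\|_{W_\tas}$ and $\||\nabla|^{\frac{4}{d+2}}v\|_X$ via Strichartz (Lemma~\ref{lemma of exotic 0} for the latter): the free evolution contributes $\lesssim\beta$ by \eqref{condition 3}, \eqref{condition 2}, the error contributes $\lesssim\beta$ by \eqref{condition 4} plus Strichartz, and the nonlinear differences contribute a small multiple of $\|v\|_{W_\tas}+\||\nabla|^{\frac{4}{d+2}}v\|_X+\|v\|_Z$ by the lemmas above; closing this estimate (absorbing the small terms) gives \eqref{short time 2} and \eqref{short time 1}. Third, estimate $\|v\|_Z$ by interpolating between $S$ and $W_\tas$ as in Lemma~\ref{well posedness lemma}, yielding the exponent $\kappa$. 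Fourth, feed these bounds back into \eqref{diff1} (with H\"older, using $\|u\|_{W_\tas},\|w\|_{W_\tas},\|u\|_{W_\tbs},\|w\|_{W_\tbs}$ all small — the last two via \eqref{inter d}) to get \eqref{short time 3}, and into \eqref{inter g}, \eqref{exotic 2} to get \eqref{short time 6}.

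The main obstacle is the mass-critical gradient difference, \eqref{short time 6} via \eqref{exotic 2}: the right-hand side of \eqref{exotic 2} contains the term $\|\la\nabla\ra u\|_S^{\frac{4}{d+1}}+\|\la\nabla\ra w\|_S^{\frac{4}{d+1}}$, which is \emph{not} small — it is only bounded by $B_1+B_2$. One must therefore be careful that this non-small factor always appears multiplied by a genuinely small factor: in \eqref{exotic 2} it is paired with $\|u\|_S^{\cdot}\|u\|_Z^{\cdot}+\|w\|_S^{\cdot}\|w\|_Z^{\cdot}$ times $\|v\|_Z^{\frac{4}{d+1}}\||\nabla|^{\frac{4}{d+2}}v\|_X^{\frac{d-3}{d+1}}$, and one checks that, after substituting the interpolation $\|u\|_Z\lesssim\|u\|_S^{1-\kappa}\|u\|_{W_\tas}^\kappa\lesssim\beta^{\kappa'}$ (and likewise for $w$ using \eqref{condition 1}), the whole product is $O(\beta^{\kappa''})$. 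The bookkeeping of which powers of $\beta$ survive — in particular verifying $d\ge 5$ is used so that $\frac{d-3}{d+1}>0$ and the Hölder exponents in \eqref{exotic 4}, \eqref{exotic 5} are in range — is the delicate part; everything else is a routine Strichartz contraction.
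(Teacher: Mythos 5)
Your proposal is essentially the same argument as the paper's: apply Lemma~\ref{well posedness lemma} to $u$, then bootstrap the Duhamel formula for $v=u-w$ using Lemmas~\ref{lemma of exotic 0}--\ref{lemma of exotic 2}, with the $Z$-norm obtained by interpolating between $S$ and $W_{\tas}$. You also correctly identify the one genuinely delicate point, namely that the factor $\|\la\nabla\ra u\|_S^{4/(d+1)}+\|\la\nabla\ra w\|_S^{4/(d+1)}$ in \eqref{exotic 2} is of size $B_1+B_2$ and is only saved by the small companion factors.

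Two small points you gloss over. First, before invoking Lemma~\ref{well posedness lemma} on $u(t_0)$ one does not yet know $e^{i(t-t_0)\Delta}u(t_0)$ is small in $W_{\tas}\cap X$: conditions \eqref{condition 3}, \eqref{condition 2} control only the free evolution of the \emph{difference} $u(t_0)-w(t_0)$. To pass to $u(t_0)$ one must first bound $e^{i(t-t_0)\Delta}w(t_0)$, which requires running Strichartz on the Duhamel formula for $w$ and in particular first closing the a priori bound $\|\la\nabla\ra w\|_{S(I)}\lesssim B_1$ from \eqref{condition 1} and \eqref{condition 4}. This is a real preliminary step in the paper, not a mere absorption of $\beta_0$. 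Second, your aside that $d\ge 5$ is used so that $\frac{d-3}{d+1}>0$ is not quite the reason: that exponent is already positive for $d=4$; the actual reason $d\ge 5$ is isolated in this paper is the failure of Lipschitz continuity of $\nabla(|u|^{4/d}u)$, which forces the fractional machinery in the first place. Neither of these affects the correctness of your overall approach.
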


\begin{proof}
First denote by $\kappa_1$ the number such that
\begin{align}\label{kappa}
\|u\|_{Z}&\lesssim \|u\|^{1-\kappa_1}_S\|u\|_{W_\tas}^{\kappa_1}.
\end{align}
By definition of $u$ and $w$ we have
\begin{align}
u(t)&=e^{i(t-t_0)\Delta}u(t_0)+i\int_{t_0}^t e^{i(t-s)\Delta} F(u)\,ds,\\
w(t)&=e^{i(t-t_0)\Delta}w(t_0)+i\int_{t_0}^t e^{i(t-s)\Delta} (F(w)-e)\,ds.
\end{align}
Using Strichartz, \eqref{inter1}, \eqref{inter3}, \eqref{condition 1} and \eqref{inter4} we obtain that
\begin{align}
&\,\|\la\nabla \ra w\|_{S(I)}\nonumber\\
\lesssim&\,\|w(t_0)\|_{H_x^1}+\bg(\|w\|^{\frac{4}{d}}_{W_\tas(I)}+\|w\|^{\frac{4}{d-2}}_{W_\tbs(I)}\bg)\|\la\nabla \ra w\|_{S(I)}+\|\la \nabla \ra e\|_{L_{t,x}^{\frac{2(d+2)}{d+4}}}\nonumber\\
\lesssim& \,B_1+\beta+(\beta_0^{\frac{4}{d}}+B_1^{\frac{8}{d(d-2)}}\beta_0^{\frac{4}{d}})\|\la\nabla \ra w\|_{S(I)}.
\end{align}
By choosing $\beta_0$ sufficiently small we infer that
\begin{align}\label{inter aa}
\|\la\nabla \ra w\|_{S(I)}\lesssim B_1.
\end{align}
Now \eqref{kappa} and \eqref{condition 1} also yield
\begin{align}\label{kappa2}
\|\la\nabla\ra w\|_{Z(I)}\lesssim\beta_0^{\kappa_1}.
\end{align}
Using Strichartz, \eqref{condition 1}, \eqref{inter1}, \eqref{inter3}, \eqref{inter aa}, \eqref{inter4} and \eqref{condition 4} we infer that
\begin{align}
\| e^{i(t-t_0)\Delta}w_0\|_{W_\tas(I)}&\lesssim \| w\|_{W_\tas(I)}+\| F(w)\|_{L_{t,x}^{\frac{2(d+2)}{d+4}}}
+\|e\|_{L_{t,x}^{\frac{2(d+2)}{d+4}}}\nonumber\\
&\lesssim \beta_0+\beta_0^{\frac{4}{d}}+\beta\lesssim \beta_0^{\frac{4}{d}}.
\end{align}
Similarly, \eqref{inter f}, \eqref{exotic 1}, \eqref{inter aa}, \eqref{condition 1} and \eqref{condition 4} yield
\begin{align}
&\,\||\nabla|^{\frac{4}{d+2}}e^{i(t-t_0)\Delta}w_0\|_{X(I)}\nonumber\\
\lesssim& \,\||\nabla|^{\frac{4}{d+2}}w\|_{X(I)}+\||\nabla|^{\frac{4}{d+2}}F(w)\|_{Y(I)}+\|\la\nabla\ra e\|_{L_{t,x}^{\frac{2(d+2)}{d+4}}}\nonumber\\
\lesssim& \,\beta_0+\beta_0^{\frac{d+2}{d-2}}+\beta_0+\beta\lesssim\beta_0.
\end{align}
Combining with \eqref{condition 3}, \eqref{condition 2} and the triangular inequality we deduce that
\begin{align}
\| e^{i(t-t_0)\Delta}u(t_0)\|_{W_\tas(I)}&\lesssim \beta_0^{\frac{4}{d}},\label{interbb}\\
\||\nabla|^{\frac{4}{d+2}}e^{i(t-t_0)\Delta}u(t_0)\|_{X(I)}&\lesssim\beta_0\label{intercc}.
\end{align}
Hence, by choosing $\beta_0$ sufficiently small, we know from Lemma \ref{well posedness lemma} that
\begin{align}
\|\la \nabla\ra u\|_{S(I)}&\lesssim \|u(t_0)\|_{H^1}\lesssim B_1+B_2,\label{bound of nabla u0}\\
\|u\|_{W_{\tas}(I)}&\leq 2\| e^{i(t-t_0)\Delta}u_0\|_{W_{\tas}(I)}\lesssim \beta_0^{\frac{4}{d}},\label{bound of nabla u1}\\
\||\nabla|^{\frac{4}{d+2}} u\|_{X(I)}&\leq 2\| |\nabla|^{\frac{4}{d+2}}e^{i(t-t_0)\Delta}u_0\|_{X(I)}\lesssim \beta_0.\label{bound of nabla u2}
\end{align}
Combining with \eqref{inter aa} we already have \eqref{short time 5}. Using Strichartz, \eqref{interbb}, \eqref{bound of nabla u0}, \eqref{inter1}, \eqref{inter3}, \eqref{bound of nabla u2} and \eqref{inter d} we obtain
\begin{align}
\|\la\nabla\ra u\|_{W_\tas(I)}&\lesssim \| e^{i(t-t_0)\Delta}u(t_0)\|_{W_\tas(I)}+\| F(u)\|_{L_{t,x}^{\frac{2(d+2)}{d+4}}}\nonumber\\
&\lesssim \beta_0^{\frac{4}{d}}+\|\la\nabla\ra u\|_{W_\tas(I)}^{1+\frac{4}{d}}+\beta_0^{\frac{4c}{d-2}}\|\la\nabla\ra u\|_{W_\tas(I)}.
\end{align}
By standard continuity arguments we conclude that
\begin{align}\label{new 1}
\|\la\nabla\ra u\|_{W_\tas(I)}\lesssim \beta_0^{\frac{4}{d}}.
\end{align}
Similarly, from Strichartz, \eqref{intercc}, \eqref{bound of nabla u0}, \eqref{inter f}, \eqref{exotic 1}, \eqref{kappa} and \eqref{new 1} we infer that
\begin{align}
\||\nabla|^{\frac{4}{d+2}}u\|_{X(I)}&\lesssim \||\nabla|^{\frac{4}{d+2}}e^{i(t-t_0)\Delta}u(t_0)\|_{X(I)}+\||\nabla|^{\frac{4}{d+2}}F(u)\|_{Y(I)}\nonumber\\
&\lesssim \beta_0+\||\nabla|^{\frac{4}{d+2}}u\|_{X(I)}^{\frac{d+2}{d-2}}+\beta_0^{\frac{4\kappa_1 c}{d}}\||\nabla|^{\frac{4}{d+2}}u\|_{X(I)}
\end{align}
and using standard continuity arguments we see that
\begin{align}
\||\nabla|^{\frac{4}{d+2}}u\|_{X(I)}\lesssim \beta_0.
\end{align}
Next, we define $v:=u-w$. Then
\begin{align}
v(t)=e^{i(t-t_0)\Delta}(u(t_0)-w(t_0))+i\int_{t_0}^t e^{i(t-s)}(F(v+w)-F(w)+e)\,ds
\end{align}
and
\begin{align}\label{shorttime1}
\|\la\nabla\ra v\|_{S(I)}\leq \|\la\nabla\ra u\|_{S(I)}+\|\la\nabla\ra w\|_{S(I)}\lesssim B_1+B_2.
\end{align}
From Strichartz, \eqref{kappa}, \eqref{bound of nabla u0}, \eqref{inter aa} and \eqref{condition 3} we know that
\begin{align}\label{kappa1}
\| e^{i(t-t_0)\Delta}(u(t_0)-w(t_0))\|_{Z(I)}\lesssim \beta^{\kappa_1}.
\end{align}
By Strichartz, Sobolev, \eqref{condition 3}, \eqref{inter1}, \eqref{inter3}, \eqref{diff1}, \eqref{kappa1}, \eqref{condition 1}, \eqref{inter4}, \eqref{inter d} and \eqref{inter aa} we have
\begin{align}
&\,\| v\|_{Z\cap W_\tas(I)}\nonumber\\
\lesssim& \,\|  e^{i(t-t_0)}(u(t_0)-w(t_0))\|_{Z\cap W_\tas(I)}
+\|  e\|_{L_{t,x}^{\frac{2(d+2)}{d+4}}}\nonumber\\
&\,+\| (F(v+w)-F(w))\|_{L_{t,x}^{\frac{2(d+2)}{d+4}}(I)}\nonumber\\
\lesssim&\, \beta^{\kappa_1}+(\|v\|^{\frac{4}{d}}_{W_{\tas}(I)}+\|v\|^{\frac{4}{d-2}}_{W_{\tbs}(I)}
+\|w\|^{\frac{4}{d}}_{W_{\tas}(I)}+\|w\|^{\frac{4}{d-2}}_{W_{\tbs}(I)})\| v\|_{W_\tas (I)}\nonumber\\
\lesssim&\, \beta^{\kappa_1}+\| v\|^{1+\frac{4}{d}}_{Z\cap W_{\tas}(I)}+\||\nabla|^{\frac{4}{d+2}}v\|^{\frac{4c}{d-2}}_{X(I)}\| v\|_{Z\cap W_{\tas}(I)}
+\beta_0^{\frac{4}{d}}\|v\|_{Z\cap W_\tas (I)}.\label{long 3}
\end{align}
Now using Strichartz, Sobolev, \eqref{kappa2}, \eqref{condition 1}, \eqref{condition 2}, \eqref{shorttime1}, \eqref{inter e}, \eqref{inter g}, \eqref{exotic 2} and the identity
\begin{align*}
F(v+w)-F(w)=v\int_0^1 F_z(v+(1+\theta) w)\,d\theta+\bar{v}\int_0^1 F_{\bar{z}}(v+(1+\theta) w)\,d\theta
\end{align*}
we obtain that
\begin{align}\label{long2}
&\,\||\nabla|^{\frac{4}{d+2}}v\|_{X(I)}\nonumber\\
\lesssim &\,\| |\nabla|^{\frac{4}{d+2}}e^{i(t-t_0)}(u(t_0)-w(t_0))\|_{X(I)}
+\|\la\nabla\ra e\|_{L_{t,x}^{\frac{2(d+2)}{d+4}}}\nonumber\\
&\,+\||\nabla|^{\frac{4}{d+2}}(F(v+w)-F(w))\|_{Y(I)}\nonumber\\
\lesssim &\,\beta+\bg( \||\nabla|^{\frac{4}{d+2}}v\|^{\frac{8}{d^2-4}}_{X(I)}\|\nabla v\|^{\frac{4d}{d^2-4}}_{S(I)}+\||\nabla|^{\frac{4}{d+2}}w\|^{\frac{8}{d^2-4}}_{X(I)}\|\nabla w\|^{\frac{4d}{d^2-4}}_{S(I)}\bg)\||\nabla|^{\frac{4}{d+2}}v\|_{X(I)}\nonumber\\
&\,+\bg(\|v\|^{\frac{4(1-c)}{d}}_{S(I)}\|v\|^{\frac{4c}{d}}_{Z(I)}
+\|w\|^{\frac{4(1-c)}{d}}_{S(I)}\|w\|^{\frac{4c}{d}}_{Z(I)}\bg)\||\nabla|^{\frac{4}{d+2}}v\|_{X(I)}\nonumber\\
&\,+\bg(\|\la\nabla\ra v\|^{\frac{4}{d+1}}_{S(I)}+\|\la\nabla\ra w\|^{\frac{4}{d+1}}_{S(I)}\bg)\bg(\|v\|^{\frac{4(1-c)}{d(d+1)}}_{S(I)}\|v\|^{\frac{4c}{d(d+1)}}_{Z(I)}
+\|w\|^{\frac{4(1-c)}{d(d+1)}}_{S(I)}\|w\|^{\frac{4c}{d(d+1)}}_{Z(I)}\bg)\nonumber\\
&\,\times\bg(\|v\|^{\frac{4}{d+1}}_{Z(I)}
\||\nabla|^{\frac{4}{d+2}}v\|^{\frac{d-3}{d+1}}_{X(I)}\bg)\nonumber\\
\lesssim &\,\beta+\||\nabla|^{\frac{4}{d+2}}v\|^{1+\frac{8}{d^2-4}}_{X(I)}+\beta_0^{\frac{8}{d^2-4}}\||\nabla|^{\frac{4}{d+2}}v\|_{X(I)}
+\|v\|^{\frac{4c}{d}}_{Z(I)}\||\nabla|^{\frac{4}{d+2}}v\|_{X(I)}\nonumber\\
&\,+\beta_0^{\frac{4c\kappa_1}{d}}\||\nabla|^{\frac{4}{d+2}}v\|_{X(I)}
+\|v\|^{\frac{4}{d+1}+\frac{4c}{d(d+1)}}_{Z(I)}
\||\nabla|^{\frac{4}{d+2}}v\|^{\frac{d-3}{d+1}}_{X(I)}\nonumber\\
&\,+\beta_0^{\frac{4c\kappa_1}{d(d+1)}}\|v\|^{\frac{4}{d+1}}_{Z(I)}
\||\nabla|^{\frac{4}{d+2}}v\|^{\frac{d-3}{d+1}}_{X(I)}.
\end{align}
Define
\begin{align}
\|v\|_{W(I)}:=\| v\|_{Z\cap W_\tas(I)}+\||\nabla|^{\frac{4}{d+2}}v\|_{X(I)}.
\end{align}
Adding \eqref{long2} to \eqref{long 3} and absorbing the terms with powers of $\beta_0$ on the r.h.s. to the l.h.s., we obtain
\begin{align}
\|v\|_{W(I)}\lesssim \beta^{\kappa_1}+\|v\|^{1+\frac{4}{d}}_{W(I)}+\|v\|^{1+\frac{4c}{d-2}}_{W(I)}+\|v\|^{1+\frac{8}{d^2-4}}_{W(I)}
+\|v\|^{1+\frac{4c}{d}}_{W(I)}+\|v\|^{1+\frac{4}{d+1}}_{W(I)}.
\end{align}
By standard continuity arguments we infer that
\begin{align}\label{small final1}
\|v\|_{W(I)}\lesssim \beta^{\kappa_1}
\end{align}
and we conclude \eqref{short time 2} and \eqref{short time 1}. \eqref{short time 3} and \eqref{short time 6} follow already from the calculation given in \eqref{long 3} and \eqref{long2}. This completes the proof.
\end{proof}

Having all the preliminaries, we are at the position to prove Theorem \ref{long time pert}.

\begin{proof}[Proof of Theorem \ref{long time pert}]
We first show that
\begin{align}\label{w finiteness S}
\|\la\nabla \ra w\|_{S(I)}\leq C
\end{align}
for some $C=C(B_1,B_2,B_3)>0$. By \eqref{condition c2} we may subdivide $I$ into $J_1=J_1(B_3)$ subintervals $K_j=[s_j,s_{j+1}]$, $j=0,\cdots,J_1-1$, such that
\begin{align}\label{w finiteness W}
\|w\|_{W_\tas\cap W_\tbs(K_j)}\leq \delta_0
\end{align}
for some small $\delta_0$ to be chosen later. On $K_0$, using Strichartz, H\"older and \eqref{condition b} we obtain that
\begin{align}
\|\la\nabla\ra w\|_{S(K_0)}
&\lesssim \|w(s_0)\|_{H^1}+\|\la \nabla \ra e\|_{L_{t,x}^{\frac{2(d+2)}{d+4}}(K_0)}+\|\la \nabla \ra F(w)\|_{L_{t,x}^{\frac{2(d+2)}{d+4}}(K_0)}\nonumber\\
&\lesssim B_1+B_2+\beta+\bg(\|w\|^{\frac{4}{d}}_{W_\tas(K_0)}+\|w\|^{\frac{4}{d-2}}_{W_\tbs(K_0)}\bg)\|\la\nabla \ra w\|_{S(K_0)}\nonumber\\
&\lesssim B_1+B_2+\beta+(\delta_0^{\frac{4}{d}}+\delta_0^{\frac{4}{d-2}})\|\la\nabla \ra w\|_{S(K_0)}.
\end{align}
We can choose $\delta_0$ sufficiently small to absorb the term $(\delta_0^{\frac{4}{d}}+\delta_0^{\frac{4}{d-2}})\|\la\nabla \ra w\|_{S(K_0)}$ to the l.h.s. This yields
\begin{align*}
\|\la\nabla\ra w\|_{S(K_0)}\lesssim B_1+B_2.
\end{align*}
In particular,
\begin{align*}
\|w(s_1)\|_{H^1}\lesssim B_1+B_2.
\end{align*}
Notice also that $\delta_0$ is only dependent on $B_3$. Thus we may iterate the previous step over all $j$ to infer that
\begin{align*}
\|\la\nabla\ra w\|_{S(K_j)}\lesssim B_1+B_2
\end{align*}
for all $j$. Summing all the estimates on $K_j$ over $j$ up yields \eqref{w finiteness S}. Using \eqref{inter a} and \eqref{w finiteness S} we are able to divide $I$ into $J_2=J_2(B_1,B_2,B_3)$ intervals $L_j=[t_j,t_{j+1}]$, $j=0,\cdots,J_{2}-1$, such that
\begin{align}\label{final long 1}
\||\nabla|^{\frac{4}{d+2}}w\|_{X(L_j)}+\|\la\nabla\ra w\|_{W_\tas(L_j)}\leq\beta_0,
\end{align}
with $\beta_0=\beta_0(C(B_1,B_2,B_3),C(B_1,B_2,B_3)+B_1)$ defined by Lemma \ref{short time pert}. By \eqref{condition a} and \eqref{condition aa} we have
\begin{align}
\| e^{i(t-t_0)\Delta}(u(t_0)-w(t_0))\|_{W_\tas(L_0)}&\leq\beta,\label{gg1}\\
\||\nabla|^{\frac{4}{d+2}} e^{i(t-t_0)\Delta}(u(t_0)-w(t_0))\|_{X(L_0)}&\leq\beta\label{gg2}
\end{align}
by setting initially $\alpha=\beta_1$ with $\beta_1=\beta_1(B_1,B_2,B_3)$ from Lemma \ref{short time pert} . Thus Lemma \ref{short time pert} is applicable for $L_0$. In particular, we have for all $j=1,\cdots,J_2-1$ and $\beta\in(0,\alpha)$
\begin{align*}
\|u-w\|_{Z\cap W_\tas (L_j)}&\leq C_0\beta^\kappa,\\
\||\nabla|^{\frac{4}{d+2}}(u-w)\|_{X(L_j)}&\leq C_0\beta^{\kappa},\\
\|F(u)-F(w)\|_{L_{t,x}^{\frac{2(d+2)}{d+4}}(L_j)}&\leq C_0\beta^\kappa,\\
\||\nabla|^{\frac{4}{d+2}}(F(u)-F(w))\|_{Y(L_j)}&\leq C_0\beta^\kappa,\\
\|\la \nabla\ra u\|_{S(L_j)}&\leq C_0C(B_1,B_2,B_3),
\end{align*}
with $\kappa\in(0,1)$ and $C_0=C_0(B_1,B_2,B_3)>0$, provided that
\begin{align}
\| e^{i(t-t_j)\Delta}(u(t_j)-w(t_j))\|_{W_\tas(L_j)}&\leq \beta,\\
\||\nabla|^{\frac{4}{d+2}} e^{i(t-t_j)\Delta}(u(t_j)-w(t_j))\|_{X(L_j)}&\leq \beta
\end{align}
hold for all $j=1,\cdots,J_2-1$. We prove this using inductive arguments. One checks that
\begin{align}
&\,\| e^{i(t-t_{j})\Delta}(u(t_{j})-w(t_{j}))\|_{W_\tas(I_{j})}\nonumber\\
\lesssim &\,\| e^{i(t-t_{0})\Delta}(u(t_{0})-w(t_{0}))\|_{W_\tas(I_{j})}+\|e\|_{L_{t,x}^{\frac{2(d+2)}{d+4}}[t_0,t_{j}]}\nonumber\\
&\,+\|F(u)-F(w)\|_{L_{t,x}^{\frac{2(d+2)}{d+4}}[t_0,t_{j}]}\nonumber\\
\lesssim &\, \beta+\beta+C_0j\beta^\kappa,\\
\nonumber\\
&\,\||\nabla|^{\frac{4}{d+2}}e^{i(t-t_{j})\Delta}(u(t_{j})-w(t_{j}))\|_{X(I_{j})}\nonumber\\
\lesssim &\,\||\nabla|^{\frac{4}{d+2}}e^{i(t-t_{0})\Delta}(u(t_{0})-w(t_{0}))\|_{X(I_{j})}+\|\nabla e\|_{L_{t,x}^{\frac{2(d+2)}{d+4}}[t_0,t_{j}]}\nonumber\\
&\,+\||\nabla|^{\frac{4}{d+2}}(F(u)-F(w))\|_{Y[t_0,t_{j}]}\nonumber\\
\lesssim &\, \beta+\beta+C_0 j\beta^\kappa.
\end{align}
Choosing $\alpha$ iteratively small completes the proof.
\end{proof}

\subsubsection*{Acknowledgments}
%\selectlanguage{German}
The author acknowledges the funding by Deutsche Forschungsgemeinschaft (DFG) through the Priority Programme SPP-1886 (No. NE 21382-1).

\selectlanguage{English}

\addcontentsline{toc}{section}{References}
%\bibliography{bib_double_crit}

\begin{thebibliography}{10}

\bibitem{Akahori2013}
{\sc Akahori, T., Ibrahim, S., Kikuchi, H., and Nawa, H.}
\newblock Existence of a ground state and scattering for a nonlinear
  {S}chr\"{o}dinger equation with critical growth.
\newblock {\em Selecta Math. (N.S.) 19}, 2 (2013), 545--609.

\bibitem{phy_double_crit_1}
{\sc Barashenkov, I.~V., Gocheva, A.~D., Makhankov, V.~G., and Puzynin, I.~V.}
\newblock Stability of the soliton-like ``bubbles''.
\newblock {\em Phys. D 34}, 1-2 (1989), 240--254.

\bibitem{Carles_Sparber_2021}
{\sc Carles, R., and Sparber, C.}
\newblock Orbital stability vs. scattering in the cubic-quintic
  {S}chr\"{o}dinger equation.
\newblock {\em Rev. Math. Phys. 33}, 3 (2021), 2150004, 27.

\bibitem{Cazenave2003}
{\sc Cazenave, T.}
\newblock {\em Semilinear {S}chr\"odinger equations}, vol.~10 of {\em Courant
  Lecture Notes in Mathematics}.
\newblock New York University, Courant Institute of Mathematical Sciences, New
  York; American Mathematical Society, Providence, RI, 2003.

\bibitem{Cheng2020}
{\sc Cheng, X.}
\newblock Scattering for the mass super-critical perturbations of the mass
  critical nonlinear {S}chr\"{o}dinger equations.
\newblock {\em Illinois J. Math. 64}, 1 (2020), 21--48.

\bibitem{MiaoDoubleCrit}
{\sc Cheng, X., Miao, C., and Zhao, L.}
\newblock Global well-posedness and scattering for nonlinear {S}chr\"{o}dinger
  equations with combined nonlinearities in the radial case.
\newblock {\em J. Differential Equations 261}, 6 (2016), 2881--2934.

\bibitem{EndpointStrichartz}
{\sc Keel, M., and Tao, T.}
\newblock Endpoint {S}trichartz estimates.
\newblock {\em Amer. J. Math. 120}, 5 (1998), 955--980.

\bibitem{killip_visan_soliton}
{\sc Killip, R., Oh, T., Pocovnicu, O., and Vi\c{s}an, M.}
\newblock Solitons and scattering for the cubic-quintic nonlinear
  {S}chr\"{o}dinger equation on {$\Bbb{R}^3$}.
\newblock {\em Arch. Ration. Mech. Anal. 225}, 1 (2017), 469--548.

\bibitem{KillipVisanNotes}
{\sc Killip, R., and Vi\c{s}an, M.}
\newblock Nonlinear {S}chr\"{o}dinger equations at critical regularity.
\newblock In {\em Evolution equations}, vol.~17 of {\em Clay Math. Proc.} Amer.
  Math. Soc., Providence, RI, 2013, pp.~325--437.

\bibitem{luo2021sharp}
{\sc Luo, Y.}
\newblock Sharp scattering threshold for the cubic-quintic {NLS} in the
  focusing-focusing regime, 2021, 2105.15091.

\bibitem{phy_double_crit_2}
{\sc Pelinovsky, D.~E., Afanasjev, V.~V., and Kivshar, Y.~S.}
\newblock Nonlinear theory of oscillating, decaying, and collapsing solitons in
  the generalized nonlinear {S}chr\"odinger equation.
\newblock {\em Phys. Rev. E 53\/} (Feb 1996), 1940--1953.

\bibitem{TaoVisanZhang}
{\sc Tao, T., Visan, M., and Zhang, X.}
\newblock The nonlinear {S}chr\"{o}dinger equation with combined power-type
  nonlinearities.
\newblock {\em Comm. Partial Differential Equations 32}, 7-9 (2007),
  1281--1343.

\end{thebibliography}
%\bibliographystyle{hacm}

\end{document}